\newtheorem{thm}{Theorem}[section]
\newtheorem*{thm*}{Theorem}
\newtheorem{lemma}[thm]{Lemma}
\newtheorem{proposition}[thm]{Proposition}
\newtheorem{corollary}[thm]{Corollary}
\theoremstyle{definition}
\newtheorem*{dfn*}{Definition}
\newtheorem{definition}[thm]{Definition}
\newtheorem*{ques*}{Questions}
\newtheorem{remark}[thm]{Remark}
\newcommand{\T}{\mathbb{T}}
\newcommand{\DD}{\mathcal{D}}
\newcommand{\A}{\mathcal{A}}
\newcommand{\N}{\mathbb{N}}
\newcommand{\C}{\mathbb{C}}
\newcommand{\Z}{\mathbb{Z}}
\newcommand{\veps}{\varepsilon}
\newcommand{\aveN}{\frac{1}{N}\sum_{n=1}^N}
\newcommand{\limaveN}{\lim_{N\to\infty} \aveN}
\newcommand{\ol}[1]{\overline{#1}}
\newcommand{\orb}{\text{orb}}
\newcommand{\la}{\langle}
\newcommand{\ra}{\rangle}
\newcommand{\jdlg}{\text{Jacobs-de Leeuw-Glicksberg}}
\newcommand{\jdlgd}{\text{Jacobs-de Leeuw-Glicksberg decomposition}}
\newcommand{\Ell}{L}
\newcommand{\mps}{\text{measure-preserving system}}
\newcommand{\mpss}{\text{measure-preserving systems}}
\newcommand{\EY}{E_Y}
\newcommand{\fix}{\text{Fix}}
\newcommand{\Fix}{\text{Fix}}
\newcommand{\rg}{\text{rg}}
\newcommand{\limN}{\lim_{N\to\infty}}
\newcommand{\Eins}{\mathbf{1}}
\newcommand{\clin}{\ol{\text{lin}}}
\newcommand{\kr}{\text{kr}}
\newcommand{\LLL}{{L}}
\title{A view on multiple recurrence}
\author{Tanja Eisner}
\address{Institute of Mathematics, University of Leipzig,
P.O. Box 100 920, 04009 Leipzig, Germany}
\email{eisner@math.uni-leipzig.de}
\keywords{Multiple recurrence, distal factors of finite order, Gowers-Host-Kra seminorms, conditional Jacobs-de Leeuw-Glicksberg decomposition}
\begin{document}

\maketitle

\begin{center}
{\emph{\small Dedicated to Jacob (Jaap) Korevaar\\ on the occasion of his approaching 100$^{\text{th}}$ birthday}}
\end{center}

\begin{abstract}
In this note we present a proof of multiple recurrence for ergodic systems (and thereby of Szemer\'edi's theorem) being a mixture of three known proofs. It is based on a conditional version of the Jacobs-de Leeuw-Glicksberg decomposition and  properties of the Gowers-Host-Kra uniformity seminorms. 
\end{abstract}

%
%
%

\section{Introduction}

The celebrated Szemer\'edi theorem \cite{Szem} from 1975 asserts that every subset of the natural numbers with positive upper density 
contains arithmetic progressions of arbitrary length. This means that as soon as a set does not vanish asymptotically it has certain structure inside. 
In 1977 Furstenberg \cite{Fu77} presented his groundbreaking ergodic theoretic proof of Szemer\'edi's theorem  based on multiple recurrence 
which has had enormous impact. 
In particular, various related results in additive number theory were proven using ergodic theoretic methods such as the Furstenberg-Katznelson multidimensional Szemer\'edi theorem \cite{FuKa78}, the Bergelson-Leibman polynomial Szemer\'edi theorem  \cite{BL96}, the Green-Tao theorem on the existence of arithmetic progressions in the primes \cite{GT08,GT10}, the Tao-Ziegler polynomial Green-Tao theorem \cite{TZ08,TZ13,TZ18} and the multidimensional Green-Tao theorem \cite{TaZi15,CMT,FoZh} (where \cite{CMT} does not use ergodic theory).  For some related works see also \cite{BM00,FHK07,BLL08,FrWi09,WoZi12,FHK13,Frantz15,MR19,BMR20}.

In this note we present a proof of multiple recurrence for ergodic systems (and hence of Szemer\'edi's theorem) being a mixture of three known proofs, namely Tao's modification \cite{Tao-ET} of the classical Furstenberg-Katznelson proof \cite{FuKa78}, the ori\-ginal Furstenberg proof \cite{Fu77} and (the beginning of)
the proof of multiple convergence by Host and Kra \cite{HK}. 
For a finitary quantitative proof of Szemer\'edi's theorem with similar philosophy see Tao \cite{Tao-szem}.

Like the original Furstenberg proof \cite{Fu77} (and in contrast to the classical Furstenberg-Katznelson proof), our proof 
does not use transfinite induction  and deals for multiple recurrence of order $k$ with a tower of  $k-1$ factors, namely Furstenberg's distal factors
of order 
less than or equal to $k-1$. 
We give an alternative proof of Furstenberg's result  \cite{Fu77} that the distal factor of order $k-1$ is characteristic for $k$-term multiple ergodic averages by showing that this factor is an extension of the Host-Kra-Ziegler factor of order $k-1$ (via working with the corresponding subspaces of functions). 
 Thereby we do not need any information on the Host-Kra-Ziegler factors other than
the description of the corresponding subspaces via the Gowers-Host-Kra seminorms, not even the fact that they are factors. Note that the original proof of Furstenberg of the characteristic property of the distal factors of finite order is based on a description of conditional eigen\-functions\footnote{in Furstenberg's terminology, generalized eigenfunctions} of fibered product systems (see \cite[Theorem 7.1]{Fu77}) and analysis of diagonal measures, with a subsequent simplification by Frantzikinakis \cite[Theorem 5.2]{Frantz04}, still based on \cite[Theorem 7.1]{Fu77}, using the van der Corput trick.

Two decompositions will play a central role, namely a conditional version of the \jdlgd\ and the Host-Kra decomposition. 
The first one, decomposing the $\Ell^2$-space into conditionally almost periodic and conditionally weakly mixing functions, is a stronger version due to Tao \cite[Chapter 2]{Tao-ET} of the dichotomy between almost periodic and weakly mixing extensions due to Furstenberg, Katznelson \cite{FuKa78}. 
The Host-Kra decomposition decomposes, for every $k$, the $\Ell^2$-space into a part where the Gowers-Host-Kra seminorm of order $k+1$ vanishes and the orthogonal complement known as the Host-Kra-Ziegler factor of order $k$. 
Then properties of the Gowers-Host-Kra seminorms and the conditional Jacobs-de Leeuw-Glicksberg decomposition quickly imply 
 that the distal factors of finite order are extensions of the Host-Kra-Ziegler factors of the corresponding order,
see Proposition \ref{prop:Z_k-ap} below.  
(Host, Kra \cite[Lemma 6.2]{HK}, \cite[Lemma 18.2]{HK-book} and Ziegler \cite[Thm.~6.1]{Ziegler} showed\footnote{Host and Kra showed that the Host-Kra-Ziegler factors are isometric extensions (as defined by Furstenberg \cite{Fu77}) of each other. See, e.g., Glasner \cite[Theorem 9.14]{Glasner-book} for the characterization of isometric extensions as those being generated by conditional eigenfunctions  implying almost periodicity of such extensions.} a stronger property of the Host-Kra-Ziegler factors, namely that they are compact extensions of each other. For an alternative proof using the conditional Jacobs-de Leeuw-Glicksberg decomposition see Zorin-Kranich \cite[Lemma 10.1]{Pavel-ET}.)

The rest of the argument is standard. By definition, the distal factors of finite order are maximal compact extensions of each other. By induction, starting with the fixed (one-point) factor and using the fact that compact extensions preserve multiple recurrence (Proposition \ref{prop:ap-ext-MR} below), one has multiple recurrence for the distal factors of all orders. Since the distal factor of order $k-1$ is an extension of the Host-Kra-Ziegler factor of order $k-1$, it
is characteristic for $k$-term multiple ergodic averages by the generalized von Neumann theorem (Proposition \ref{prop:ghk-control-mult-ave} and Corollary \ref{cor:ghk-control-mult-ave} below), 
completing the proof.

As indicated above, the structure of the proof 
 is essentially implicit in Host, Kra \cite{HK}, Ziegler \cite{Ziegler} as well as in Furstenberg \cite{Fu77} combined with Furstenberg, Katznelson, Ornstein \cite{FuKaOr82}, cf.~Tao \cite[Remark 4.7]{Tao-szem}. We present here a view based on the conditional Jacobs-de Leeuw-Glicksberg decomposition and properties of the Gowers-Host-Kra seminorms.

\textbf{Acknowledgements.} The author is very grateful to B\'alint Farkas and Asgar Jamneshan for helpful discussions, comments and references.  

\section{Preliminaries}


Throughout this article we call a triple $(X,\mu,\varphi)$ a \emph{measure-preserving system}\ if $(X,\mu)$ is a 
probability space and $\varphi$ is a measurable invertible transformation on $X$ with measurable inverse which is $\mu$-preserving, i.e., $\mu(\varphi^{-1}A)=\mu(A)$ holds for every measurable set $A\subset X$.\footnote{In this case $\varphi^{-1}$ is clearly $\mu$-preserving as well. The invertibility assumption is technical and will assure that the conditional expectation commutes with the transformation. Since in our context one can work with invertible systems without loss of generality, we assume all systems to be invertible.}
For a \mps\ $(X,\mu,\varphi)$ we call the linear invertible isometry $T:\Ell^1(X,\mu)\to \Ell^1(X,\mu)$ defined by $Tf:=f\circ \varphi$ the \emph{Koopman operator} of the system. 
The Koopman operator acts as  an invertible isometry on $\Ell^p(X,\mu)$ for every $p\in [1,\infty]$ and is unitary on $\Ell^2(X,\mu)$. Since we mostly work with the Koopman operator $T$ instead of the transformation $\varphi$, we often write $(X,\mu,T)$ instead of $(X,\mu,\varphi)$.

A measure-preserving system  $(X,\mu,\varphi)$ is called \emph{ergodic} if it has no non-trivial invariant sets, i.e., if every measurable set satisfying $\varphi^{-1}A\subset A$ up to a null set (which is equivalent to $\varphi^{-1}A=A$ and hence, by the invertibility of $\varphi$, to $A=\varphi(A)$ up to a null set) 
has measure zero or one. For the corresponding Koopman operator $T$ on every $\Ell^p(X,\mu)$ this is equivalent to $\Fix T=\C\Eins$ (i.e., $T$  has only constant invariant functions). 

We now define multiple recurrence and multiple convergence. Thereby,  $f>0$ means $f\geq 0$ and $f\neq 0$ (as element of $\Ell^\infty(X,\mu)$).

\begin{definition}
Let $(X,\mu,T)$ be a measure-preserving system and $k\in \N$. We say that $(X,\mu,T)$ satisfies 
\begin{itemize}
\item \emph{multiple recurrence of order $k$} (shortly \emph{MR$_k$}) if for every $0<f\in\Ell^\infty(X,\mu)$ 
\begin{equation}\label{eq:mult-rec}
\liminf_{N\to\infty}\aveN \int_X f\cdot T^nf\cdot T^{2n}f \cdots T^{kn}f\,d\mu>0.
\end{equation}
\item \emph{multiple convergence of order $k$} 
if for every $f_1,\ldots,f_k\in\Ell^\infty(X,\mu)$ the limit of multiple ergodic averages 
\begin{equation}\label{eq:mult-erg-ave}
\aveN T^nf_1\cdot T^{2n}f_2 \cdots T^{kn}f_k
\end{equation}
exists in $\Ell^2(X,\mu)$.
\end{itemize}
We say that $(X,\mu,T)$ satisfies \emph{multiple recurrence} (shortly \emph{MR}) if it satisfies MR$_k$ for all $k\in\N$ and define analogously \emph{multiple convergence}.
\end{definition}
Note that, given multiple convergence of order $k$, the property MR$_k$ means that every function $f\in \Ell^\infty(X,\mu)$ with $f>0$ correlates with the limit of the averages $\aveN T^nf\cdots T^{kn}f$. Moreover, for functions of the form $f=\Eins_A$ for some $A\subset X$ with $\mu(A)>0$ property \eqref{eq:mult-rec} implies for the underlying transformation $\varphi$ that $\mu(A\cap \varphi^{-n}A\cap\ldots\cap \varphi^{-kn}A)>0$ holds for many $n\in\N$ explaining the name ``multiple recurrence''.

We now state Szemer\'edi's theorem which inspired the study of multiple recurrence and multiple convergence. 

\begin{thm}[Szemer\'edi \cite{Szem}]
Let $C\subset \N$ have \emph{positive upper density}, i.e., satisfy $\ol{d}(C):=
\limsup_{N\to\infty} \frac{|C\cap\{1,\ldots,N\}|}N>0$. Then $C$ contains arbitrarily long arithmetic progressions, i.e., for every $k\in\N$ there exist 
$a,n\in \N$ with $a,a+n,\ldots,a+kn\in C$. 
\end{thm}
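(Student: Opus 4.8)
The plan is to reduce Szemer\'edi's theorem to multiple recurrence of order $k$ for ergodic measure-preserving systems via Furstenberg's correspondence principle, and then to establish $\mathrm{MR}_k$ for such systems. Fix $k\in\N$ and a set $C\subset\N$ with $\ol d(C)>0$. The first step is the correspondence principle. Let $X=\{0,1\}^{\Z}$ with the left shift $\varphi$, let $x\in X$ be the indicator sequence of $C$, and let $A=\{y\in X:y_0=1\}$, a clopen set. Choosing a sequence $N_i\to\infty$ along which $\ol d(C)$ is attained and passing (to a subsequence if necessary) to a weak-$*$ limit $\mu$ of the empirical measures $\frac{1}{N_i}\sum_{n=1}^{N_i}\delta_{\varphi^n x}$, one obtains a shift-invariant Borel probability measure with $\mu(A)=\ol d(C)>0$ and, for every $n\geq1$,
\begin{equation*}
\ol d\bigl(C\cap(C-n)\cap\cdots\cap(C-kn)\bigr)\;\geq\;\mu\bigl(A\cap\varphi^{-n}A\cap\cdots\cap\varphi^{-kn}A\bigr),
\end{equation*}
since on the clopen set on the right the indicator, evaluated along the orbit of $x$, counts exactly the $j$ with $j,j+n,\dots,j+kn\in C$. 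Hence it suffices to produce a single $n\geq1$ for which the right-hand side is positive: the corresponding nonempty intersection on the left supplies an $a$ with $a,a+n,\dots,a+kn\in C$.

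The second step reduces matters to ergodic systems, since the system just constructed need not be ergodic. Applying the ergodic decomposition $\mu=\int_\Omega\mu_\omega\,d\nu(\omega)$ and Fatou's lemma to the (nonnegative) averages in \eqref{eq:mult-rec} with $f=\Eins_A$ gives
\begin{equation*}
\liminf_{N\to\infty}\aveN\mu\bigl(A\cap\varphi^{-n}A\cap\cdots\cap\varphi^{-kn}A\bigr)\;\geq\;\int_\Omega\Bigl(\liminf_{N\to\infty}\aveN\int_X \Eins_A\cdot T^n\Eins_A\cdots T^{kn}\Eins_A\,d\mu_\omega\Bigr)\,d\nu(\omega).
\end{equation*}
Because $\int_\Omega\mu_\omega(A)\,d\nu=\mu(A)>0$, the set of components with $\mu_\omega(A)>0$ has positive $\nu$-measure, and there $\Eins_A>0$ in $\Ell^\infty(X,\mu_\omega)$. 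Thus, once $\mathrm{MR}_k$ is known for the ergodic systems $(X,\mu_\omega,\varphi)$, each inner $\liminf$ is strictly positive on a set of positive $\nu$-measure and nonnegative elsewhere, so the outer average is positive; in particular some individual term $\mu(A\cap\varphi^{-n}A\cap\cdots\cap\varphi^{-kn}A)$ with $n\geq1$ is positive, completing the deduction.

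It therefore remains to prove $\mathrm{MR}_k$ for an ergodic system $(X,\mu,T)$, which is the heart of the matter and the main obstacle. The strategy is Furstenberg's: build the tower of distal factors of order $0,1,\dots,k-1$, show by induction (starting from the one-point factor) that each preserves multiple recurrence because it is a compact extension of the previous one, and show that the distal factor of order $k-1$ is characteristic for the $k$-term averages, so that positivity may be verified on that factor. The point developed in the rest of the paper is to obtain the characteristic property by comparing the distal factor with the Host-Kra-Ziegler factor of order $k-1$ through the conditional Jacobs-de Leeuw-Glicksberg decomposition together with the control and vanishing properties of the Gowers-Host-Kra seminorms, in place of Furstenberg's original analysis of conditional eigenfunctions. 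The genuinely hard work lies in the inductive step that compact extensions preserve multiple recurrence and in identifying the characteristic factor; the correspondence principle and the ergodic decomposition above are routine by comparison.
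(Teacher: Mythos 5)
Your proposal is correct and takes essentially the same route as the paper: Furstenberg's correspondence principle (Proposition \ref{prop:corr-principle}, whose standard proof via the shift on $\{0,1\}^{\Z}$, weak-$*$ limits of empirical measures, and the ergodic decomposition with Fatou you supply correctly) reduces Szemer\'edi's theorem to MR$_k$ for ergodic systems, which you then propose to prove exactly as the paper does, via the tower of distal factors, the fact that compact extensions preserve multiple recurrence (Proposition \ref{prop:ap-ext-MR}), and the characteristic-factor property obtained from Proposition \ref{prop:Z_k-ap} and Corollary \ref{cor:ghk-control-mult-ave}. The only (immaterial) difference is that the paper's correspondence principle produces an ergodic system directly, whereas you build a possibly non-ergodic shift system and pass to ergodic components afterwards.
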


Furstenberg  \cite{Fu77} divided his ergodic theoretic proof of Szemer\'edi's theorem into two parts. The difficult part was to show multiple recurrence for ergodic systems. 

\begin{thm}[Furstenberg, multiple recurrence for ergodic systems]\label{thm:mr}
Every 
ergodic\footnote{For our purposes ergodic systems  suffice by Proposition \ref{prop:corr-principle}. Note that multiple recurrence holds for general (also not necessarily invertible) \mpss, see, e.g., Einsiedler, Ward \cite[pp.~177--178]{EW}.} 
\mps\ 
satisfies MR.\footnote{Furstenberg  \cite{Fu77} showed a stronger property than MR where the lower Ces\'aro limit $\liminf_{N\to\infty}\aveN$ in \eqref{eq:mult-rec} is replaced by the lower Banach limit $\liminf_{N-M\to\infty}\sum_{n=M+1}^N$.}  
\end{thm}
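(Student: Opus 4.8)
The plan is to prove multiple recurrence for ergodic systems by induction on the order, following the architecture laid out in the introduction. The core idea is to reduce $\mathrm{MR}_k$ on an arbitrary ergodic system to the same statement on a suitable \emph{characteristic factor}, and then to establish the recurrence property on that factor by building it up through a tower of compact extensions.

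First I would set up the inductive scaffolding. The base case is the trivial (one-point) factor, on which every $\mathrm{MR}_k$ holds vacuously. The inductive mechanism rests on two pillars stated earlier in the paper: that compact extensions preserve multiple recurrence (Proposition~\ref{prop:ap-ext-MR}), and that the Gowers-Host-Kra seminorms control the multiple ergodic averages (Proposition~\ref{prop:ghk-control-mult-ave} together with Corollary~\ref{cor:ghk-control-mult-ave}). Concretely, I would fix $k\in\N$ and a $0<f\in\Ell^\infty(X,\mu)$, and argue that the limit inferior in \eqref{eq:mult-rec} is positive. The strategy is to replace $f$ inside each copy of the average by its conditional expectation $\E(f\mid \ZZ)$ onto an appropriate factor $\ZZ$ without changing the limiting behaviour of the average.

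The key step is the identification and use of the characteristic factor. By the generalized von Neumann theorem (Proposition~\ref{prop:ghk-control-mult-ave}), the $k$-term averages \eqref{eq:mult-erg-ave} are controlled by a Gowers-Host-Kra seminorm of order $k$, so the Host-Kra-Ziegler factor of order $k-1$ is characteristic; since $f>0$ forces $\E(f\mid \ZZ)>0$, it suffices to prove positivity of the averages for the projected function on that factor. Here I would invoke Proposition~\ref{prop:Z_k-ap} to pass to Furstenberg's distal factor of order $k-1$, which is an extension of the Host-Kra-Ziegler factor and, crucially, is obtained as a tower of $k-1$ maximal compact extensions over the trivial factor. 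Running the induction up this tower, and applying Proposition~\ref{prop:ap-ext-MR} at each rung to propagate $\mathrm{MR}_k$ from one factor to the next compact extension, yields $\mathrm{MR}_k$ on the distal factor of order $k-1$. Because this factor is characteristic, positivity of the average there transfers back to the original system.

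The main obstacle I expect is the correct bookkeeping in the reduction to the characteristic factor: one must verify that positivity of $\liminf_{N\to\infty}\aveN\int_X f\cdot T^nf\cdots T^{kn}f\,d\mu$ genuinely descends to the factor, which uses both that conditional expectation preserves the strict positivity $\E(f\mid\ZZ)>0$ and that the difference between the averages for $f$ and for its projection vanishes in $\Ell^2$ by the seminorm control. The delicate point is matching the \emph{order} of the seminorm that controls the $k$-term average with the order of the distal factor that carries the recurrence; a single off-by-one error in this matching would break the argument, so the heart of the proof is the precise statement and application of Proposition~\ref{prop:Z_k-ap} linking distal factors and Host-Kra-Ziegler factors of the same order. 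Once that linkage is in place, the remaining steps are the standard inductive transfer of recurrence along compact extensions.
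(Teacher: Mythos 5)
Your proposal is correct and follows essentially the same route as the paper: reduce the $k$-term average to the distal factor $\DD_{k-1}$ via Proposition~\ref{prop:Z_k-ap} and Corollary~\ref{cor:ghk-control-mult-ave}, with strict positivity of the projected function preserved by \eqref{eq:PYf>0}, then climb the tower of compact extensions $\DD_0\to\DD_1\to\ldots\to\DD_{k-1}$ from the one-point factor using Proposition~\ref{prop:ap-ext-MR}. The one streamlining the paper makes, which you may wish to adopt, is to project $f$ directly onto $D_{k-1}$ (a genuine factor by Lemma~\ref{lem:mul-E-factor}) rather than first onto $Z_{k-1}$, since taking $\E(f\mid\ZZ)$ and asserting its positivity would require knowing that $Z_{k-1}$ is induced by a factor (Proposition~\ref{prop:HKZ-factors}), a fact the paper deliberately avoids using.
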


The easy part was to build the following bridge between number theory and ergodic theory, see, e.g., Furstenberg \cite[p.~77]{Fu-book}, Kra \cite[Section 2.2]{Kra} or \cite[Section 20.1]{EFHN} for details. 

\begin{proposition}[Furstenberg, correspondence principle]\label{prop:corr-principle}
Multiple recurrence for ergodic systems implies the assertion of Szemer\'edi's theorem. More precisely, 
for every $C\subset \N$ there exists an ergodic \mps\ $(X,\mu,\varphi)$ and a measurable set $A\subset X$ with $\mu(A)\geq \ol{d}(C)$\footnote{By the same argument one can easily replace the upper density $\ol{d}(C)$ by the upper Banach density $d^*(C):=
\limsup_{N-M\to\infty}\frac{|C\cap \{M+1,\ldots,N\}|}{N-M}$ leading to the corresponding stronger version of Szemer\'edi's theorem proved by Furstenberg \cite{Fu77}.} such that, for every $k\in\N$, the set $C$ contains an arithmetic progression of length $k+1$ if and only if there exists $n\in\N$ such that the set 
$A\cap \varphi^{-n}A\cap \ldots\cap \varphi^{-kn}A$ has positive measure.
\end{proposition}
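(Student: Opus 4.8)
The plan is to realize $C$ inside a symbolic (shift) system and then transport measure-theoretic positivity of intersections back into arithmetic progressions using the density of a single orbit. First I would take $X=\{0,1\}^{\Z}$ with the product topology, which is compact and metrizable, and let $\varphi$ be the left shift, so that the Koopman operator is $T$. Put $x:=\one_C\in X$ (thus $x_m=1$ exactly when $m\in C$) and let $A:=\{y\in X:\ y_0=1\}$, a clopen cylinder, so that $\one_A$ is continuous. A direct computation shows that, writing $B_n:=A\cap\varphi^{-n}A\cap\cdots\cap\varphi^{-kn}A$, one has $\varphi^m x\in B_n$ if and only if $m,m+n,\ldots,m+kn\in C$; in particular each $B_n$ is clopen, and the orbit points lying in $B_n$ are precisely the length-$(k+1)$ progressions in $C$ with common difference $n$. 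I would then pass to the orbit closure $X_0:=\ol{\{\varphi^m x:m\in\Z\}}$, on which the orbit of $x$ is dense.

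Next I would manufacture an invariant measure recording the density of $C$. Choosing $N_i\to\infty$ with $|C\cap\{1,\ldots,N_i\}|/N_i\to\ol{d}(C)$, I form the empirical measures $\mu_i:=\tfrac{1}{N_i}\sum_{m=1}^{N_i}\delta_{\varphi^m x}$. By weak-$*$ compactness of the probability measures on the compact space $X_0$, a subsequence converges to some $\mu$, and the telescoping (Krylov--Bogolyubov) estimate $\|\mu_i-\varphi_*\mu_i\|=O(1/N_i)$ forces $\mu$ to be $\varphi$-invariant. Since $\one_A$ is continuous, $\mu(A)=\lim_i\mu_i(A)=\lim_i|C\cap\{1,\ldots,N_i\}|/N_i=\ol{d}(C)$.

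To obtain the required \emph{ergodic} system I would invoke the ergodic decomposition $\mu=\int\nu\,dP(\nu)$ into ergodic measures: from $\int\nu(A)\,dP(\nu)=\mu(A)=\ol{d}(C)$ there must exist an ergodic component $\nu$ with $\nu(A)\ge\ol{d}(C)$, and I take $(X_0,\nu,\varphi)$. The substantive direction of the equivalence is then immediate. If $\nu(B_n)>0$ for some $n$, then $B_n\cap\operatorname{supp}\nu\ne\emptyset$, so $B_n\ne\emptyset$; as $B_n$ is open and the orbit of $x$ is dense in $X_0$, some orbit point $\varphi^a x$ lies in $B_n$, whence $a,a+n,\ldots,a+kn\in C$ is a genuine progression of length $k+1$. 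This is exactly the implication used to deduce Szemer\'edi's theorem from MR: applying MR$_k$ to $(X_0,\nu,\varphi)$ with $0<\one_A$ yields $\nu(B_n)>0$ for some $n$, hence the progression.

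The hard part is the reverse implication together with the ergodicity demand. A single progression only gives $B_n\ne\emptyset$, not $\nu(B_n)>0$, so to make the stated equivalence an exact iff one must guarantee that every nonempty clopen subset of $X_0$ carries positive mass, i.e. that the working measure has full support on $X_0$; reconciling this with the passage to an ergodic component is the delicate point, since ergodic components need not inherit full support. I would address it by working on the orbit closure with an invariant measure of full support, so that nonemptiness of the clopen sets $B_n$ upgrades to positive measure. It is worth emphasizing that this direction is inessential for the application to Szemer\'edi's theorem: there the positivity of $\nu(B_n)$ is supplied by multiple recurrence, not by the mere existence of one progression, so only the implication proved above is actually needed.
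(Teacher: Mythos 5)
The paper gives no proof of this proposition at all --- it defers to Furstenberg's book, Kra's survey and \cite[Section 20.1]{EFHN} --- and your construction is precisely the standard argument found there: the orbit closure of $\one_C$ in $\{0,1\}^{\Z}$ under the shift, the clopen cylinder $A=\{y:\,y_0=1\}$, a Krylov--Bogolyubov weak-$*$ limit along a subsequence realizing $\ol{d}(C)$, and an ergodic component $\nu$ with $\nu(A)\geq\ol{d}(C)$. All of these steps are correct, and the direction you prove --- $\nu(B_n)>0$ forces $B_n\cap X_0\neq\emptyset$, hence by openness of $B_n$ and density of the orbit some $\varphi^a x\in B_n$, i.e.\ $a,a+n,\ldots,a+kn\in C$ --- is exactly the implication used to deduce Szemer\'edi's theorem from multiple recurrence.

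The gap is your treatment of the converse direction of the stated ``if and only if''. Your proposed repair via a fully supported invariant measure cannot work: ergodic components do not inherit full support, and the orbit closure need not carry \emph{any} fully supported invariant measure (for $C=\{1\}$ the only invariant measure on $X_0$ is the point mass at the zero sequence). The correct way to close the direction you left open is different: when $\ol{d}(C)>0$ one has $\nu(A)\geq\ol{d}(C)>0$, so Theorem \ref{thm:mr} itself yields, for every $k$, some $n$ with $\nu(B_n)>0$; thus the right-hand side of the equivalence holds for all $k$, and by your proven implication so does the left-hand side, making the ``if and only if'' true (if vacuously so) in the only regime that matters. You should also note that for zero-density $C$ the literal statement is in fact false, so no construction could rescue it: for $C=\{1,2\}$ the case $k=1$ of the equivalence forces $\mu(A\cap\varphi^{-n}A)>0$ for some $n$, hence $\mu(A)>0$, and then MR$_2$ produces some $n$ with $\mu(A\cap\varphi^{-n}A\cap\varphi^{-2n}A)>0$, although $C$ contains no three-term progression. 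So the proposition must be read either for sets of positive upper density or in its one-directional form --- and in either reading your argument supplies everything that is actually needed.
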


In particular, it suffices to prove 
that for every $f\in\Ell^\infty(X,\mu)$ with $f>0$ there exists $n\in\N$ such that $\int_X f\cdot T^nf\cdot T^{2n}f \cdots T^{kn}f\,d\mu>0$ which is a formally weaker property but turns out to be equivalent to MR$_k$. For a discussion and a deduction of multiple recurrence back from Szemer\'edi's theorem see, e.g., \cite[Section 20.2]{EFHN}.

Multiple convergence remained open 
 until it was proved by Host, Kra \cite{HK} in 2005, with a subsequent alternative proof by Ziegler  \cite{Ziegler}.

\begin{thm}[Multiple convergence, Host, Kra \cite{HK}, Ziegler \cite{Ziegler}]\label{thm:mr}
Every 
measure-preserving system 
satisfies multiple convergence. 
\end{thm}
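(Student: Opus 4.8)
The plan is to follow the strategy of Host and Kra \cite{HK}: reduce the problem to a characteristic factor by means of the generalized von Neumann theorem, identify that factor as an inverse limit of nilsystems, and finally establish convergence on nilsystems via equidistribution.

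First I would reduce to the Host-Kra-Ziegler factor $Z_{k-1}$. By the generalized von Neumann theorem (Proposition \ref{prop:ghk-control-mult-ave} and Corollary \ref{cor:ghk-control-mult-ave}), the $k$-term averages \eqref{eq:mult-erg-ave} are controlled in $\Ell^2(X,\mu)$ by the Gowers-Host-Kra seminorm of order $k$: if any one of the functions $f_1,\ldots,f_k$ lies in the kernel of this seminorm, the averages tend to $0$. Writing each $f_i = \E(f_i\mid Z_{k-1}) + g_i$ via the Host-Kra decomposition, where $g_i$ has vanishing seminorm of order $k$, and expanding \eqref{eq:mult-erg-ave} by multilinearity, every term containing some $g_i$ vanishes in the limit. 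Hence it suffices to prove that the averages converge when all $f_i$ are measurable with respect to $Z_{k-1}$.

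The second and decisive step is the structure theorem of Host and Kra: the factor $Z_{k-1}$ is (isomorphic to) an inverse limit of $(k-1)$-step nilsystems. Granting this, one reduces to a single nilsystem $X = G/\G$: since the averaging operators in \eqref{eq:mult-erg-ave} are uniformly bounded on $\Ell^2$, convergence on the dense subspace of functions pulled back from a nilsystem factor suffices, and on such a factor one may take the $f_i$ continuous. Then, for a $(k-1)$-step nilsystem with translation by $a \in G$, convergence follows from equidistribution of polynomial orbits on nilmanifolds: the sequence $n\mapsto (a^n x, a^{2n} x,\ldots, a^{kn} x)$ is a polynomial sequence in the nilmanifold $X^k$ and, by Leibman's theorem, equidistributes in a closed sub-nilmanifold. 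For continuous $f_1,\ldots,f_k$ this yields pointwise (hence, by bounded convergence, $\Ell^2$-) convergence of \eqref{eq:mult-erg-ave} to the integral of $f_1\otimes\cdots\otimes f_k$ over that sub-nilmanifold, completing the argument.

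The hard part is the structure theorem of the second step. It is the deep content of \cite{HK} (and, in a different formulation, of Ziegler \cite{Ziegler}) and requires the full Host-Kra machinery: the construction of the cubic measures $\mu^{[k]}$ on $X^{2^k}$, the resulting definition and properties of the seminorms, and a delicate inductive cocycle analysis (of Conze-Lesigne type) showing that the factor cut out by nonvanishing of the seminorm of order $k$ carries a pro-nilpotent structure. This is precisely the ingredient that the recurrence argument of the present note avoids, and the reason why multiple convergence --- unlike multiple recurrence --- genuinely needs it.
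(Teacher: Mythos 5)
The paper contains no proof of this theorem to compare against: it is imported as the deep result of Host, Kra \cite{HK} and Ziegler \cite{Ziegler}, and the note explicitly declines to use or reprove the structure theory behind it (see the remarks surrounding Proposition \ref{prop:HKZ-factors}, where the structure theorem is stated only for completeness). Your outline is the standard Host--Kra strategy and is sound as a sketch. The first step matches tools the paper does develop: the reduction to $Z_{k-1}$ via Proposition \ref{prop:ghk-control-mult-ave} and Corollary \ref{cor:ghk-control-mult-ave} is exactly the mechanism the paper uses for recurrence, and note that your multilinear expansion tacitly needs Proposition \ref{prop:HKZ-factors} --- that $Z_{k-1}$ is induced by a factor --- so that $P_{Z_{k-1}}f_i$ is again bounded (orthogonal projection alone would not give this). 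The second step, the structure theorem together with equidistribution of $(a^nx,a^{2n}x,\ldots,a^{kn}x)$ in a sub-nilmanifold of $X^k$, you correctly identify as the hard core and cite rather than prove; so as a self-contained argument your proposal is incomplete by design, which is consistent with the paper's own stance that this machinery can be avoided for recurrence but not (by any currently known route of this kind) for convergence.

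Two reductions you should make explicit rather than leave implicit. First, the theorem as stated concerns arbitrary measure-preserving systems, whereas the seminorms of Definition \ref{def:GHK-1}, the subspaces $Z_k$, and the structure theorem are all set up for \emph{ergodic} systems; one must pass through the ergodic decomposition (using the uniform $\Ell^\infty$-bounds on the averages to integrate the fiberwise limits) to deduce the general case. Second, the structure theorem applies to ergodic \emph{regular} (separable) systems, so one should first restrict to the separable factor generated by $f_1,\ldots,f_k$ as in Remark \ref{rem:factors}(b). Also, in your density argument on the inverse limit, ``uniformly bounded on $\Ell^2$'' is not quite the right phrase for a multilinear expression: the correct estimate controls the averages by the $\Ell^2$-norm of one function times the $\Ell^\infty$-norms of the others, and works because the approximants (conditional expectations onto the nil-factors) obey the same $\Ell^\infty$-bounds. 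These are routine but genuine steps of the proof.
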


\medskip

We now collect some definitions. 

We do not need to assume systems to be separable (i.e., the underlying $\sigma$-algebra to be countably generated up to null sets). 
So we work with the following abstract notion of a factor. A \mps\ $(Y,\nu,S)$ is called a \emph{(Markov) factor} of $(X,\mu,T)$ if there exists a Markov homomorphism\footnote{An operator $J:\Ell^1(Y,\nu)\to \Ell^1(X,\mu)$ is called \emph{Markov} if it is positive (i.e., $f\geq 0$ implies $Jf\geq 0$) and satisfies $J\Eins=\Eins$ as well as $ J'\Eins=\Eins$ (i.e., $\int_X Jf\,d\mu=\int_Yf\,d\nu$ holds for every $f\in \Ell^1(Y,\nu)$). A Markov operator is called \emph{Markov homomorphism} (or \emph{Markov embedding}) if it preserves the absolut value or, equivalently, is multiplicative on $\Ell^\infty(Y,\nu)$, see, e.g., \cite[Theorem 7.23]{EFHN}.}  $J:\Ell^1(Y,\nu)\to \Ell^1(X,\mu)$ which respects the Koopman operators, i.e., satisfies $JS=TJ$. In this case $(X,\mu,T)$ is called a  \emph{(Markov) extension} of $(Y,\nu,S)$ and $J$ is called the \emph{(Markov) factor map}.  


\begin{remark}\label{rem:factors}
\begin{itemize}
\item[(a)] For a (Markov) factor $(Y,\nu,S)$ of a measure-preserving system $(X,\mu,T)$ with  factor map $J:\Ell^1(Y,\nu)\to \Ell^1(X,\nu)$ the set $J\Ell^\infty(Y,\nu)$ is a conjugation invariant, $T$- and $T^{-1}$-invariant subalgebra of $\Ell^\infty(X,\mu)$ containing $\Eins$, and there is a (up to isomorphism one-to-one) correspondence between  (Markov) factors of $(X,\mu,T)$ and  conjugation invariant, $T$- and $T^{-1}$-invariant subalgebras of $\Ell^\infty(X,\mu)$ containing $\Eins$. Every such subalgebra $\A$ of functions further corresponds to a $\varphi$- and $\varphi^{-1}$-invariant sub-$\sigma$-algebra of the original $\sigma$-algebra on $X$ for the underlying transformation $\varphi$, namely the smallest $\varphi$- and $\varphi^{-1}$-invariant sub-$\sigma$-algebra such that all functions from $\A$ are measurable with respect to it. We will work with the above characterization of factors via subalgebras of functions. 
\item[(b)]
For separable systems every (Markov) factor map is induced by a \emph{point factor map}, i.e., $J$ is of the form $Jf=f\circ \pi$ for a  morphism $\pi:X\to Y$, i.e., a measure-preserving map such that $\pi\circ \varphi=\psi \circ \pi$ holds $\mu$-a.e.~for the underlying transformations $\varphi$ and $\psi$ on $X$ and $Y$, respectively.
For details see, e.g., \cite[Chapter 7]{EF-book}. The readers who prefer point factors to Markov factors can assume without loss of generality that the system is separable by passing to the $\varphi$- and $\varphi^{-1}$-invariant sub-$\sigma$-algebra generated by the function $f$ for multiple recurrence and by the functions $f_1,\ldots,f_k$ for multiple convergence of order $k$, respectively.
\end{itemize}
\end{remark} 

In particular, for a \mps\ $(X,\mu,T)$ with factor $(Y,\nu,S)$ the corresponding factor map $J:\Ell^1(Y,\mu)\to \Ell^1(X,\nu)$ 
 acts as a contraction w.r.t.~the $\Ell^p$-norm for every $p\in[1,\infty]$. 
The adjoint operator $J'$ extends to a Markov operator 
$$
\EY:=J':\Ell^1(X,\mu)\to\Ell^1(Y,\nu)$$ 
called the \emph{conditional expectation operator}. It also acts as a contraction w.r.t.~the $\Ell^p$-norm for every $p\in[1,\infty]$.  Recall that the conditional expectation operator satisfies 
\begin{equation}\label{eq:EY}
\int_Y \EY f\,d\nu=\int_X f\,d\mu,
\quad
\EY(Jg\cdot f)=g \EY f\quad \forall f\in \Ell^1(X,\mu)\ \forall g\in\Ell^\infty(Y,\nu).
\end{equation}
Since we assume factors to be invertible, 
one also has
\begin{equation}\label{eq:ET=SE}
\EY T=S\EY.
\end{equation}
The operator 
$$
P_Y:=J\EY:\Ell^2(X,\mu)\to\Ell^2(X,\mu)
$$ 
is the orthogonal projection onto the subspace $J\Ell^2(Y,\nu)$. We call $P_Y$  the 
\emph{projection onto the factor} $Y$. It extends to a Markov operator acting as a contraction w.r.t.~every $\Ell^p$-norm, $p\in[1,\infty]$. Another important property of $P_Y$ is 
\begin{equation}\label{eq:PYf>0}
P_Yf>0\quad \text{for every }f>0. 
\end{equation}
(Indeed, the Markov property of $J$ and $\EY$ implies $P_Yf\geq 0$ 
and $\int_XP_Yf\,d\mu=\int_Xf\,d\mu>0$ ensuring $P_Yf\neq 0$.)

We say that a subspace $\A$ of $\Ell^2(X,\mu)$ is \emph{induced by a  factor} $(Y,\nu,S)$ of $(X,\mu,T)$  if $\A=J\Ell^2(Y,\nu)$ for the corresponding Markov embedding $J$ or, equivalently,  
$\A=\Ell^2(X,\Sigma',\mu)$ for the $\varphi$-invariant sub-$\sigma$-algebra $\Sigma'$ corresponding to $(Y,\nu,S)$ for the underlying transformation $\varphi$ on $X$.
 By Remark \ref{rem:factors}(a),  this is the case if and only if $\A\cap \Ell^\infty(X,\mu)$ is a conjugation invariant, $T$- and $T^{-1}$-invariant subalgebra of $\Ell^\infty(X,\mu)$ containing $\Eins$.

\section{Single recurrence and von Neumann's decomposition}

We recall how the mean ergodic theorem and the property MR$_1$ rely on the von Neumann decomposition 
\begin{equation}\label{eq:mul-vN-dec}
H=\fix T\oplus \ol{\rg(I-T)}
\end{equation}
for contractions $T$ on a Hilbert space $H$, see, e.g., \cite[Theorem 8.6]{EFHN}. 
Let $(X,\mu,T)$ be a \mps\ and $f\in\Ell^2(X,\mu)=:H$.\footnote{For $k=1$ one does not need $f$ to be bounded.} By the telescopic sum argument the second part of \eqref{eq:mul-vN-dec} does not contribute to the limit of the ergodic averages $\aveN T^nf$ and  $T$ acts as the identity operator on the first part of  \eqref{eq:mul-vN-dec}.  Thus if $f>0$ then
$$
\limaveN \int_X f\cdot T^nf\,d\mu=
\left\la\limaveN T^nf,f\right\ra =
\|P_{\Fix T} f\|_{\Ell^2(X,\mu)}^2>0.
$$
The last inequality follows from the measure preserving property of $T$ (which implies $\int_X P_{\Fix T}f\,d\mu=\int_X f\,d\mu\neq 0$) and reflects the fact that $\Fix T$ is induced by a factor, namely the fixed factor. 
In other words, the second part in the von Neumann decomposition \eqref{eq:mul-vN-dec} does not contribute to the limit of $\aveN T^nf$ 
and we need $P_{\Fix T}f>0$ 
to restrict to the case $\Ell^2(X)=\Fix T$ where the assertion is trivial due to the very structured behavior of the orbits (being one point).

\section{Double recurrence and the classical \jdlgd}

To illustrate the argument for higher $k$ we now briefly discuss the case $k=2$ which is analogous to the case $k=1$. For a detailed exposition see, e.g., \cite[Section 8.3]{EF-book}. 



\begin{definition}
Let $T$ be a contraction on a Hilbert space $H$. We call $f\in H$ \emph{weakly mixing} if $\limaveN |\la T^nf,f\ra|^2=0$.
\end{definition}

\begin{remark}\label{rem:wmix}
By the Koopman-von Neumann lemma, see, e.g., \cite[Lemma 9.16]{EFHN}, for a contraction $T$ on a Hilbert space $H$, $f\in H$ is weakly mixing if and only if there exists a set $J\subset \N$ with density $1$ (i.e., satisfying $d(J):=
\lim_{N\to\infty} \frac{|J\cap\{1,\ldots,N\}|}N=1$) such that $\lim_{n\to\infty,n\in J}\la T^nf,f\ra=0$.  
\end{remark}

Double convergence and double recurrence rely on the following decomposition replacing the von Neumann decomposition, where $\T$ denotes the unit circle. 

\begin{thm}[Jacobs-de Leeuw-Glicksberg decomposition]\label{thm:JdLG}
Let $T\in \LLL(H)$ be a contraction on a Hilbert space $H$. Then one has the orthogonal decomposition 
\begin{equation}\label{eq:jdlg-dec}
H=\clin\{f\in H:\, Tf=\lambda f\text{ for some }\lambda\in\T\}\oplus \{f\in H:\, f\text{ is weakly mixing}\}.
\end{equation}
\end{thm}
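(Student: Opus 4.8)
The plan is to prove the Jacobs--de Leeuw--Glicksberg decomposition by exhibiting the two summands as mutually orthogonal closed subspaces whose sum is all of $H$. Write $H_{\mathrm{rev}}:=\clin\{f\in H:\, Tf=\lambda f\text{ for some }\lambda\in\T\}$ for the reversible (almost periodic) part and $H_{\mathrm{wm}}:=\{f\in H:\, f\text{ is weakly mixing}\}$ for the weakly mixing part. First I would reduce to the case where $T$ is unitary: since $T$ is a contraction, the subspace on which $T$ acts as a surjective isometry (equivalently, the reversible part in the sense of the Jacobs--de Leeuw--Glicksberg theory) can be separated from the part where $\|T^n f\|\to 0$ along a density-one set, and the latter is trivially weakly mixing. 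For the unitary part one invokes the spectral theorem, which is where the real content sits.

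The key step is the spectral-theoretic analysis of the unitary (or isometric) part. Assuming $T$ is unitary on $H$, the spectral theorem gives a projection-valued measure $E$ on $\T$ with $T=\int_\T \lambda\, dE(\lambda)$, and for each $f$ a positive finite measure $\sigma_f(\cdot):=\la E(\cdot)f,f\ra$, the spectral measure of $f$. I would decompose $\sigma_f$ into its atomic part and its continuous (non-atomic) part. The atomic part corresponds exactly to eigenvectors with unimodular eigenvalues, so the closed span of all such eigenvectors is $H_{\mathrm{rev}}$ and equals the range of the projection onto the atomic spectral subspace. The continuous part is handled by computing
\begin{equation*}
\limaveN |\la T^n f,f\ra|^2 = \lim_{N\to\infty}\frac1N\sum_{n=1}^N \left|\int_\T \lambda^n\, d\sigma_f(\lambda)\right|^2,
\end{equation*}
and recognizing via Wiener's lemma (Ces\`aro averages of Fourier coefficients of a measure detect its atoms) that this limit equals $\sum_{\lambda\in\T}\sigma_f(\{\lambda\})^2$, the sum of squares of the atom masses. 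Hence $f$ is weakly mixing precisely when $\sigma_f$ is continuous, i.e.\ when $f$ lies in the range of the continuous spectral projection.

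With this identification, orthogonality of $H_{\mathrm{rev}}$ and $H_{\mathrm{wm}}$ is immediate, since the atomic and continuous spectral subspaces are orthogonal complements of one another, and their sum is all of $H$ by the Lebesgue decomposition of each $\sigma_f$. It remains to check two closure/span points: that $H_{\mathrm{wm}}$ is genuinely a closed subspace (so that writing it without a closed-span is justified) and that the weakly mixing vectors coincide with the continuous spectral subspace rather than merely containing it. The inclusion that every continuous-spectrum vector is weakly mixing follows from the Wiener computation above; the reverse inclusion follows because any eigenvector component would contribute a nonzero atom, spoiling weak mixing.

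The main obstacle I expect is the passage from the contraction case to the unitary case. For a general contraction one must first split off the completely non-unitary (``vanishing'') part, using the fact that the reversible subspace in the sense of $\clin$ of unimodular eigenvectors is contained in the maximal subspace on which $T$ acts as an invertible isometry, while on the orthogonal complement the powers $T^n f$ tend to $0$ in a Ces\`aro or density sense, forcing weak mixing. Making this splitting precise --- rather than simply invoking the spectral theorem --- is the delicate part, since $T$ need not be normal; one typically argues via the mean ergodic / Jacobs--de Leeuw--Glicksberg splitting of the weakly-operator-closed semigroup generated by $T$, or reduces to the unitary dilation. Everything after that reduction is a routine Wiener-lemma spectral computation.
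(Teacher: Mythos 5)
Your spectral-theoretic treatment of the unitary case is correct and classical: for unitary $T$ the spectral measure $\sigma_f$ splits into atomic and continuous parts, Wiener's lemma gives $\limaveN\bigl|\la T^nf,f\ra\bigr|^2=\sum_{\lambda\in\T}\sigma_f(\{\lambda\})^2$ (one-sided averages suffice since $|\hat{\sigma}_f(-n)|=|\hat{\sigma}_f(n)|$ for a positive measure), and so the weakly mixing vectors are exactly those with continuous spectral measure, the orthogonal complement of the closed span of the unimodular eigenvectors. Note for the record that the paper itself offers no proof of Theorem \ref{thm:JdLG}; the standard reference proof (e.g.\ \cite[Chapter 16]{EFHN}) is quite different from yours: it works with the minimal idempotent of the weakly compact semitopological semigroup $\ol{\{T^n:\,n\in\N\}}$ and needs neither the spectral theorem nor dilation theory, which is why it extends beyond single contractions on Hilbert spaces.

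The genuine gap in your argument is the reduction from a contraction to a unitary. The splitting you assert --- a part on which $T$ is a surjective isometry versus a part on which $\|T^nf\|\to 0$ along a density-one set, the latter being ``trivially weakly mixing'' --- is false as stated: the unilateral shift $S$ on $\ell^2(\N)$ is a contraction with no unimodular eigenvectors and no reducing subspace on which it acts unitarily, yet $\|S^nf\|=\|f\|$ for every $n$ and every $f$, so both of your proposed summands are $\{0\}$ while $H\neq\{0\}$. What is actually true, and is precisely the nontrivial content once $T$ fails to be normal, is that every vector in the completely non-unitary part of the Sz.-Nagy--Foia\c{s}--Langer decomposition $H=H_{u}\oplus H_{cnu}$ satisfies $\la T^nf,f\ra\to 0$ (weak, not norm or density-one norm, convergence --- for the shift $S^nf\to 0$ only weakly) and is therefore weakly mixing. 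Your two suggested fixes do not close this: invoking ``the Jacobs--de Leeuw--Glicksberg splitting of the semigroup generated by $T$'' is circular, since that splitting is the theorem being proved; and the dilation route requires the nontrivial Sz.-Nagy--Foia\c{s} theorem that the minimal unitary dilation of a completely non-unitary contraction has absolutely continuous spectral measure, after which the Riemann--Lebesgue lemma gives $\la T^nf,f\ra=\la U^nf,f\ra\to 0$. Granting that theorem, your scheme does assemble correctly: $H=H_a\oplus H_c\oplus H_{cnu}$, the unimodular eigenvectors are reducing for a contraction (so $H_a\subset H_u$), and $H_c\oplus H_{cnu}$ is exactly the set of weakly mixing vectors, sums being handled by intersecting density-one sets as in Remark \ref{rem:wmix}. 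So the proposal is repairable, but as written the key step for non-normal contractions is missing, and the mechanism you name for it would fail.
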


Let now $(X,\mu,T)$ be a \mps\ and $f,g\in \Ell^\infty(X,\mu)$. One shows via the so-called van der Corput trick that the double averages
$$
\aveN T^nf \cdot T^ng
$$
converge to zero whenever $f$ or $g$ is weakly mixing. Since the first part of the decomposition \eqref{eq:jdlg-dec} is induced by a factor, namely the Kronecker factor, we have that $P_\kr f,P_\kr g\in\Ell^\infty(X,\mu)$ for the corresponding projection $P_\kr$. Thus 
double convergence reduces to showing it for eigenfunctions for which it is immediate. For double recurrence let $g:=f>0$. Then $P_\kr f>0$ 
by \eqref{eq:PYf>0} and we can assume by \eqref{eq:jdlg-dec} and the above observation that $(X,\mu,T)$ coincides with its Kronecker factor. In this case the orbit of $f$ is compact and one uses almost periodicity of $f$ to show that the limit in \eqref{eq:mult-rec} (for $k=2$) is greater than zero. 

To summarize, the second part in the decomposition \eqref{eq:jdlg-dec} does not contribute to the limit and, using the fact that the first part is induced by a factor, we can assume that this factor coincides with the original system. Then one uses the very structured behavior of the orbits. 

Next we consider two generalizations of \eqref{eq:jdlg-dec}, the  conditional Jacobs-de Leeuw-Glicksberg decomposition and the Host-Kra decomposition, see \eqref{eq:rel-jdlg-a+w} and \eqref{eq:hk-dec} below.

\section{Conditional \jdlgd}

We now present the abstract conditional setting introduced by Tao \cite[Chapter 2]{Tao-ET}. To simplify the notation we will shorten
$\Ell^p(X,\mu)$ to $\Ell^p(X)$ for a probability space $(X,\mu)$.
Moreover, by writting $cf$ for $c\in\Ell^\infty(Y)$ and $f\in \Ell^1(X)$ we identify $c$ with its image under $J:\Ell^1(Y)\to\Ell^1(X)$.

\begin{definition}
Let $(X,\mu,T)$ be a \mps\ with 
factor $(Y,\nu,S)$. 
For $f,g\in \Ell^2(X)$ we call the function
$$
\la f,g\ra_{\Ell^2(X|Y)}:=\EY(f\cdot\ol{g})
\in\Ell^1(Y)
$$
the \emph{conditional scalar product} of $f$ and $g$ w.r.t.~$Y$ and the function
$$
\|f\|_{\Ell^2(X|Y)}:=(\la f,f\ra_{\Ell^2(X|Y)})^{\frac12}=(\EY(|f|^2))^{\frac12}\in\Ell^2(Y)
$$
the \emph{conditional 
norm} of $f$ w.r.t.~$Y$. Moreover, the \emph{conditional $\Ell^2$-space} is defined by
$$
\Ell^2(X|Y):=\{f\in\Ell^2(X):\, \|f\|_{\Ell^2(X|Y)}\in\Ell^\infty (Y)\}.
$$
A function $f\in \Ell^2(X|Y)$ is called 
\emph{conditionally almost periodic} if for every $\veps>0$ there exists a \emph{finitely generated module zonotope} $Z$ (i.e., a set of the form 
$$
\{c_1f_1+\ldots+c_mf_m:\,\|c_1\|_{\Ell^\infty(Y)},\ldots,\|c_m\|_{\Ell^\infty(Y)}\leq 1\}
$$
 for some $m\in\N$, $f_1,\ldots,f_m\in\Ell^2(X|Y)$) such that for every $n\in\Z$ there exists  $g\in Z$ with 
$$
\|\|T^nf-g\|_{\Ell^2(X|Y)}\|_{\Ell^\infty(Y)}<\veps.
$$ 
We denote the space of all conditionally almost periodic functions by $A(X|Y)$. 

A function $f\in \Ell^2(X)$ is called  \emph{conditionally weakly mixing}\footnote{We follow here Zorin-Kranich \cite[Section 8]{Pavel-ET}. Tao \cite[Section 2.14.1]{Tao-ET} calls a function $f\in\Ell^2(X|Y)$ conditionally weakly mixing if $\limaveN
\|\la T^nf,f\ra_{\Ell^2(X|Y)}\|_{\Ell^2(Y)}^2=0$. It is easy to see that these two definitions are equivalent on $\Ell^2(X|Y)$.} if 
$$
\limaveN
\|\la T^nf,f\ra_{\Ell^2(X|Y)}\|_{\Ell^1(Y)}
=0.
$$
We denote the set of all such functions by $W(X|Y)$.
\end{definition}
To shorten the notation we abbreviate 
$$
\|f\|_{\Ell^{2,\infty}(X|Y)}:=\|\|f\|_{\Ell^2(X|Y)}\|_{\Ell^\infty(Y)}
$$
for $f\in \Ell^2(X|Y)$. It is easy to see, by establishing a Cauchy-Schwarz inequality for the conditional scalar product, that $\|\cdot\|_{\Ell^{2,\infty}(X|Y)}$ is a norm on $\Ell^2(X|Y)$. 
\begin{remark}\label{rem:AandW}
\begin{itemize}
\item[(a)] It follows directly from the definition that 
$$
\|fg\|_{\Ell^{2,\infty}(X|Y)}\leq \|f\|_{\Ell^\infty(X)}\|g\|_{\Ell^{2,\infty}(X|Y)}
$$
holds for every $f\in\Ell^\infty(X)$ and $g\in\Ell^{2,\infty}(X|Y)$.
\item[(b)]
The space $\Ell^2(X|Y)$ is clearly an $\Ell^\infty(Y)$-module (i.e., a linear subspace closed under multiplication by $\Ell^\infty(Y)$-functions) with
$$
\Ell^\infty(X)\subset \Ell^2(X|Y)\subset \Ell^2(X).
$$
Moreover, $\Ell^2(X|Y)$ is $T$-invariant by \eqref{eq:ET=SE} with $T$ acting as a contraction w.r.t.~the norm $\|\cdot\|_{\Ell^{2,\infty}(X|Y)}$.
\item[(c)] It is easy to see that $A(X|Y)$ is a $T$- and $T^{-1}$-invariant sub-$\Ell^\infty(Y)$-module.  Moreover, it is closed in $\Ell^2(X|Y)$ w.r.t.~the norm $\|\cdot\|_{\Ell^{2,\infty}(X|Y)}$.
\item[(d)] It is easy but somewhat longer to show that $W(X|Y)$ is a closed linear subspace of $\Ell^2(X)$.\footnote{See Zorin-Kranich \cite[Lemma 8.1]{Pavel-ET} based on Tao \cite[Exercises 2.14.1, 2.14.2]{Tao-ET}.} 
\item[(e)] Analogously to Remark \ref{rem:wmix},
 $f\in \Ell^2(X|Y)$ is conditionally weakly mixing if and only if there exists a set $J\subset \N$ with density $1$ such that $\lim_{n\to\infty,n\in J}\|\la T^nf,f\ra_{\Ell^2(X|Y)}\|_{\Ell^1(Y)}=0$.  
\item[(f)] For the trivial one-point factor $(Y,\nu,S)$ the conditional definitions coincide with the unconditional ones. In particular, in this case $f\in W(X|Y)$ if and only if $f$ is weakly mixing.
\end{itemize}
\end{remark}

\begin{remark}[Connection to conditional almost periodicity in measure]\label{rem:capm}
A function $f\in \Ell^2(X|Y)$ is called  \emph{conditionally almost periodic in measure} if for every $\veps>0$ there exists a measurable set $M\subset Y$ with $\nu(M)>1-\veps$ such that $\Eins_Mf\in A(X|Y)$, see Tao \cite[Def.~2.13.7]{Tao-ET}, cf.~Furstenberg, Katznelson \cite[Section 2]{FuKa78} and Furstenberg \cite[Section 6.3]{Fu-book}. Denote the set of all such functions by $AM(X|Y)$. This set satisfies  
$$
AM(X|Y)=\ol{A(X|Y)}\cap \Ell^2(X|Y),
$$
where the closure is taken in $\Ell^2(X)$, see, e.g., Zorin-Kranich \cite{Pavel}, cf.~Furstenberg, Katznelson \cite[Thm.~2.1]{FuKa78} and Furstenberg \cite[Thm.~6.13]{Fu-book}. For the reader's convenience we present here the short argument. The inclusion ``$\subset$'' follows directly from the definition of conditional almost periodicity. For the converse inclusion let $f\in\Ell^2(X|Y)$ such that $f=\lim_{n\to\infty} f_n$ in $\Ell^2(X)$ for some sequence $(f_n)\subset A(X|Y)$. By \eqref{eq:EY} we have
$$
\int_Y \EY(|f_n-f|^2)\,d\nu=\|f_n-f\|_{\Ell^2(X)}^2\to 0\quad \text{as }n\to\infty.
$$
Hence we can assume without loss of generality (passing to a subsequence if ne\-cessary) that $\EY(|f_n-f|^2)\to 0$ as $n\to\infty$ holds $\nu$-a.e. By Egorov's theorem there exists a measurable set $M\subset Y$ with $\nu(M)>1-\veps$ such that $\|\Eins_M\EY(|f_n-f|^2)\|_{\Ell^\infty(Y)}\to 0$, or, equivalently by \eqref{eq:EY}, 
$$
\|\Eins_Mf_n-\Eins_Mf\|_{\Ell^{2,\infty}(X|Y)}\to 0\quad \text{as }n\to\infty.
$$
Since for every $n\in\N$ the function $\Eins_M f_n$ is conditionally almost periodic (by using  the same module zonotopes as for $f_n$), so is $\Eins_M f$ by Remark \ref{rem:AandW}(c). 
\end{remark}

The following conditional version of Theorem \ref{thm:JdLG} is due to Tao \cite[Section 2.14.2]{Tao-ET}, cf.~Furstenberg, Katznelson \cite[Prop.~2.2]{FuKa78}
and Furstenberg, Katznelson, Ornstein \cite[Thm.~10.1]{FuKaOr82}. 
Here and later we mean the closure in $\Ell^2(X)$ if not specified otherwise.

\begin{thm}[
Conditional Jacobs-de Leeuw-Glicksberg decomposition]\label{mr:higher-JdLG}
Let $(X,\mu,T)$ be a 
\mps\ with  
factor $(Y,\nu,S)$. Then 
one has the orthogonal decomposition
\begin{equation}\label{eq:rel-jdlg-a+w}
\Ell^2(X)=\ol{A(X|Y)}\oplus W(X|Y). 
\end{equation}
\end{thm}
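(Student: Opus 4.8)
The plan is to identify $W(X|Y)$ with the orthogonal complement $\ol{A(X|Y)}^{\perp}$ in $\Ell^2(X)$; as both spaces are closed, this is equivalent to the decomposition \eqref{eq:rel-jdlg-a+w}. Using that $\Ell^2(X|Y)$ is dense in $\Ell^2(X)$, I would carry out the construction for $f\in\Ell^2(X|Y)$ and pass to the general case at the end. The single tool behind everything is the fiber product $(X\times_Y X,\ \mu\times_Y\mu,\ T\otimes T)$, on which functions $f\otimes\ol g$ act by $(x,x')\mapsto f(x)\ol{g(x')}$. For $f\in\Ell^2(X|Y)$ set $K_f:=\limaveN (T\otimes T)^n(f\otimes\ol f)$, the orthogonal projection of $f\otimes\ol f$ onto the $(T\otimes T)$-invariant functions, which exists by the mean ergodic theorem.

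A fiberwise Fubini computation gives
\[
\|K_f\|_{\Ell^2(X\times_Y X)}^2=\la K_f,\,f\otimes\ol f\ra=\limaveN\big\|\la T^nf,f\ra_{\Ell^2(X|Y)}\big\|_{\Ell^2(Y)}^2,
\]
so, by the equivalence of the two formulations of conditional weak mixing recorded in the definition, $f\in W(X|Y)$ if and only if $K_f=0$. Let $\Phi_{K_f}$ denote the integral operator on $\Ell^2(X|Y)$ with kernel $K_f$ (acting fiberwise). Then $\la\Phi_{K_f}f,f\ra_{\Ell^2(X)}=\la K_f,f\otimes\ol f\ra=\|K_f\|^2$, and since $K_f$ is $(T\otimes T)$-invariant, $\Phi_{K_f}$ commutes with $T$.

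The crux --- and the step I expect to cost the most --- is to show $\Phi_{K_f}f\in\ol{A(X|Y)}$. The mechanism is conditional compactness: $K_f$ is conditionally Hilbert--Schmidt, so approximating it in conditional $\Ell^2$-norm by conditionally finite-rank kernels $\sum_{j=1}^{m}a_j\otimes\ol{b_j}$ shows that $\Phi_{K_f}$ maps the $\|\cdot\|_{\Ell^{2,\infty}(X|Y)}$-unit ball into a set contained, up to any $\veps$, in the module zonotope generated by $a_1,\dots,a_m$. As $\Phi_{K_f}$ commutes with the conditional contraction $T$ (Remark~\ref{rem:AandW}(b)), the orbit $\{T^n\Phi_{K_f}f\}=\{\Phi_{K_f}T^nf\}$ lies in this conditionally precompact set, which is exactly the defining property of conditional almost periodicity of $\Phi_{K_f}f$. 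Setting up conditional Hilbert--Schmidt operators and the finite-rank approximation rigorously is the real work; the rest is formal.

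Granting this, the two inclusions follow. Since $A(X|Y)$, and hence $\ol{A(X|Y)}$, is a closed $\Ell^\infty(Y)$-submodule, the orthogonal projection $P$ onto $\ol{A(X|Y)}$ is $\Ell^\infty(Y)$-linear; testing against indicators $\Eins_B$ ($B\subset Y$ measurable) via $\Eins_B Pf=P(\Eins_B f)$ forces $\|Pf\|_{\Ell^2(X|Y)}\le\|f\|_{\Ell^2(X|Y)}$ a.e., so $P$ preserves $\Ell^2(X|Y)$. Hence for $f\in\Ell^2(X|Y)$ the function $h:=(I-P)f$ lies in $\Ell^2(X|Y)\cap\ol{A(X|Y)}^{\perp}$, and $\|K_h\|^2=\la\Phi_{K_h}h,h\ra=0$ because $\Phi_{K_h}h\in\ol{A(X|Y)}$ is orthogonal to $h$; thus $h\in W(X|Y)$. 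As $(I-P)\Ell^2(X|Y)$ is dense in $\ol{A(X|Y)}^{\perp}$ and $W(X|Y)$ is closed, this yields $\ol{A(X|Y)}^{\perp}\subseteq W(X|Y)$. For the reverse inclusion I would show $\ol{A(X|Y)}\cap W(X|Y)=\{0\}$: a conditionally almost periodic $a\neq0$ has a conditionally precompact orbit, so a pigeonhole argument produces a syndetic set of conditional return times $m$ with $\|\la T^ma,a\ra_{\Ell^2(X|Y)}\|_{\Ell^1(Y)}$ near $\|a\|_{\Ell^2(X)}^2$, whence $\limaveN\|\la T^na,a\ra_{\Ell^2(X|Y)}\|_{\Ell^1(Y)}>0$ and $a\notin W(X|Y)$; approximating $u\in\ol{A(X|Y)}\setminus\{0\}$ by such $a$ and using $\|\la x,y\ra_{\Ell^2(X|Y)}\|_{\Ell^1(Y)}\le\|x\|_{\Ell^2(X)}\|y\|_{\Ell^2(X)}$ gives the same for $u$. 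Consequently, for $g\in W(X|Y)$ we have $(I-P)g\in\ol{A(X|Y)}^{\perp}\subseteq W(X|Y)$, so $Pg=g-(I-P)g\in\ol{A(X|Y)}\cap W(X|Y)=\{0\}$ and $g\in\ol{A(X|Y)}^{\perp}$. Thus $W(X|Y)=\ol{A(X|Y)}^{\perp}$, completing the proof.
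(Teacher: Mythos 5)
Your main line is sound and is, in fact, the standard argument behind this theorem: note that the paper itself gives no proof at all, quoting the statement from Tao \cite[Section 2.14.2]{Tao-ET}, cf.\ Furstenberg--Katznelson \cite{FuKa78} and Furstenberg--Katznelson--Ornstein \cite{FuKaOr82}, and your fiber-product construction --- the invariant kernel $K_f$, the identity $\|K_f\|^2=\limaveN\|\la T^nf,f\ra_{\Ell^2(X|Y)}\|_{\Ell^2(Y)}^2$, the commutation of $\Phi_{K_f}$ with $T$, the conditional Hilbert--Schmidt/finite-rank mechanism, and the $\Ell^\infty(Y)$-module argument showing that the projection $P$ onto $\ol{A(X|Y)}$ preserves $\Ell^2(X|Y)$ --- is exactly how those sources (and Zorin-Kranich \cite{Pavel-ET}) proceed. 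One caveat in your crux step: the finite-rank approximation of $K_f$ cannot be made uniform over all fibers, only off a subset of $Y$ of small measure (an Egorov step), so what you actually obtain is that $\Phi_{K_f}f$ is conditionally almost periodic \emph{in measure}; this is harmless, since $AM(X|Y)\subset\ol{A(X|Y)}$ by Remark~\ref{rem:capm}, but it has to be said. Granting that, your inclusion $\ol{A(X|Y)}^{\perp}\subseteq W(X|Y)$ is correct.

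The genuine gap is your proof of $\ol{A(X|Y)}\cap W(X|Y)=\{0\}$. A conditionally almost periodic function does \emph{not} have a precompact orbit in the norm $\|\cdot\|_{\Ell^{2,\infty}(X|Y)}$: a module zonotope is bounded but not totally bounded, because its coefficients range over the unit ball of $\Ell^\infty(Y)$, so the classical covering-plus-pigeonhole argument producing syndetic return times breaks down; indeed your quantitative claim is false. Concretely, take $X=Y\times\T$ with $\varphi(y,t)=(\psi y, t+\alpha(y))$, $\psi$ weakly mixing, $B\subset Y$ with $\nu(B)=\frac12$, and $a(y,t):=\Eins_B(y)\rme^{2\pi\rmi t}+\Eins_{Y\setminus B}(y)\rme^{4\pi\rmi t}$. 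The orbit of $a$ lies exactly in the module zonotope generated by the two characters, so $a\in A(X|Y)$, yet $\|\la T^na,a\ra_{\Ell^2(X|Y)}\|_{\Ell^1(Y)}=\nu(B\cap\psi^{-n}B)+\nu((Y\setminus B)\cap\psi^{-n}(Y\setminus B))$, which tends to $\frac12$ along a set of $n$ of density one, while $\|a\|_{\Ell^2(X)}^2=1$: there is no syndetic set of times with correlation near $\|a\|_{\Ell^2(X)}^2$ (the Ces\`aro average does stay positive, consistent with the theorem, but your mechanism does not produce that). The repair stays entirely inside your own framework: for $h\in W(X|Y)\cap\Ell^2(X|Y)$ one has $K_h=0$, hence $\limaveN\|\la T^nh,g\ra_{\Ell^2(X|Y)}\|_{\Ell^2(Y)}^2=\la K_h,g\otimes\ol g\ra=0$ for \emph{every} $g\in\Ell^2(X|Y)$; then, writing $\la h,a\ra=\la T^nh,T^na\ra$, approximating $T^na$ by $\sum_j c_{j,n}f_j$ and using $|\la T^nh,c_{j,n}f_j\ra|\le\|\la T^nh,f_j\ra_{\Ell^2(X|Y)}\|_{\Ell^1(Y)}$ together with Cauchy--Schwarz, Ces\`aro averaging gives $h\perp A(X|Y)$ directly. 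Finally, a general $g\in W(X|Y)$ is handled by the truncations $\Eins_{\{\EY(|g|^2)\le M\}}\,g$, which lie in $W(X|Y)\cap\Ell^2(X|Y)$ (the $Y$-measurable cutoffs pull out of $\EY$, only decreasing the conditional correlations) and converge to $g$ in $\Ell^2(X)$; this yields $W(X|Y)\subseteq\ol{A(X|Y)}^{\perp}$ without any recurrence-time argument and without your detour through $\ol{A(X|Y)}\cap W(X|Y)=\{0\}$.
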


\begin{remark}\label{rem:a=e}
Zorin-Kranich \cite{Pavel} showed that for regular 
systems with ergodic factor 
\begin{equation}\label{eq:A=E}
\ol{A(X|Y)}=\ol{E(X|Y)}
\end{equation}
holds for the set $E(X|Y)$ of conditional eigenfunctions, see  Jamneshan \cite[Theorem 4.1]{Ja} for the general case in the context of arbitrary group actions. 
Here  $f\in \Ell^2(X|Y)$ is called a \emph{conditional eigenfunction} if 
its orbit is contained in a $T$-invariant \emph{finitely generated sub-$\Ell^\infty(Y)$-module} (i.e., a set of the form 
$$
\{c_1f_1+\ldots+c_mf_m:\,c_1,\ldots,c_m\in\Ell^\infty(Y)\}
$$ for some $m\in\N$, $f_1,\ldots,f_m\in\Ell^2(X|Y)$).\footnote{This  definition due to Furstenberg, Weiss \cite{FW96}, see also Zimmer 
\cite[Section 7]{Zi}, is equivalent to the one of generalized eigenfunctions introduced by Furstenberg \cite{Fu77} for ergodic factors $(Y,\nu,S)$.} Thus we also have the orthogonal decomposition
\begin{equation}\label{eq:rel-jdlg-e+w}
\Ell^2(X)=\ol{E(X|Y)}\oplus W(X|Y),
\end{equation}
cf.~Zimmer \cite[Cor.~7.10]{Zi}.
For two different recent abstract approaches to 
\eqref{eq:rel-jdlg-e+w} in the context of general group actions 
see Jamneshan \cite{Ja} and Edeko, Haase, Kreidler \cite{EHK}. 
\end{remark}

%

The following lemma is crucial,  see Tao \cite[Exercise 2.13.6]{Tao-ET} combined with Remark \ref{rem:capm}. 
We assume ergodicity of the factor in order to simplify the argument and follow the proof by Zorin-Kranich \cite{Pavel}. 

\begin{lemma}\label{lem:mul-E-factor}
Let $(X,\mu,T)$ be a \mps\ with an ergodic factor $(Y,\nu,S)$. Then 
$\ol{A(X|Y)}$ is induced by a factor. 
\end{lemma}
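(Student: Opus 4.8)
The plan is to verify the criterion recorded in Remark \ref{rem:factors}(a) (and the discussion preceding the statement of the lemma): a closed subspace of $\Ell^2(X)$ is induced by a factor exactly when it is the $\Ell^2$-closure of its bounded part and that bounded part is a conjugation invariant, $T$- and $T^{-1}$-invariant subalgebra of $\Ell^\infty(X)$ containing $\Eins$. Writing $\B:=\ol{A(X|Y)}\cap\Ell^\infty(X)$, I would therefore check that (i) $\B$ contains $\Eins$ and is invariant under conjugation and under $T^{\pm1}$; (ii) $\B$ is closed under multiplication; and (iii) $\ol{\B}=\ol{A(X|Y)}$. Part (i) is routine: $\Eins\in A(X|Y)$ since $T\Eins=\Eins$; conjugation invariance follows by conjugating a module zonotope, because the Koopman operator commutes with complex conjugation and $\|\cdot\|_{\Ell^2(X|Y)}$ is conjugation invariant; and $T^{\pm1}$-invariance follows from Remark \ref{rem:AandW}(c) together with the unitarity of $T$, which makes $\ol{A(X|Y)}$ invariant under $T^{\pm1}$.

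For the multiplicative structure I would pass to conditional eigenfunctions via Remark \ref{rem:a=e}, which gives $\ol{A(X|Y)}=\ol{E(X|Y)}$ for the ergodic factor $Y$. The key point is that products of bounded conditional eigenfunctions are again bounded conditional eigenfunctions. Indeed, if $f\in E(X|Y)\cap\Ell^\infty(X)$, then the $\Ell^\infty(Y)$-module generated by its bounded orbit $\{T^nf\}$ is a submodule of the finitely generated module witnessing $f\in E(X|Y)$; since finitely generated $\Ell^\infty(Y)$-modules behave like measurable finite-rank bundles, this submodule is again finitely generated and, by a measurable selection from the orbit, admits generators drawn from the orbit, hence bounded. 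Given $f,g\in E(X|Y)\cap\Ell^\infty(X)$ with bounded generators $f_1,\dots,f_p$ and $g_1,\dots,g_q$, the orbit of $fg$ lies in the finitely generated $T$-invariant module generated by the bounded functions $f_ig_j\in\Ell^\infty(X)\subset\Ell^2(X|Y)$, so $fg\in E(X|Y)\cap\Ell^\infty(X)$. Conditional Gram--Schmidt together with the conditional Cauchy--Schwarz bound on the coefficients shows $E(X|Y)\cap\Ell^\infty(X)\subset A(X|Y)$, so such products lie in $\B$.

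To finish both (ii) for general elements of $\B$ and (iii) simultaneously, I would prove the density statement that every $f\in\B=\ol{E(X|Y)}\cap\Ell^\infty(X)$ is an $\Ell^2(X)$-limit of a uniformly bounded sequence of conditional eigenfunctions. Granting this, (iii) is immediate, and for (ii) one takes uniformly bounded eigenfunctions $a_n\to f$ and $b_n\to g$; then $a_nb_n\in E(X|Y)\cap\Ell^\infty(X)\subset\ol{A(X|Y)}$ and $a_nb_n\to fg$ in $\Ell^2(X)$ (using the uniform bound and boundedness of $g$), whence $fg\in\ol{A(X|Y)}\cap\Ell^\infty(X)=\B$. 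The main obstacle is precisely this uniformly bounded approximation: the module generators produced by the definition of conditional almost periodicity lie only in $\Ell^{2,\infty}(X|Y)$ and need not be pointwise bounded, whereas both the error estimate of Remark \ref{rem:AandW}(a) and the very formation of products require one factor in $\Ell^\infty(X)$ (note that $\Ell^{2,\infty}(X|Y)$ is not closed under multiplication). I expect to overcome this through conditional almost periodicity in measure (Remark \ref{rem:capm}): using $\ol{A(X|Y)}=\ol{E(X|Y)}$ and Egorov's theorem on the ergodic base $Y$, the generators can be replaced, off a set in $Y$ of arbitrarily small measure, by bounded truncated functions with small conditional-norm error, which restores boundedness and transfers the clean eigenfunction computation above through the identity $AM(X|Y)=\ol{A(X|Y)}\cap\Ell^2(X|Y)$. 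This measure-theoretic bookkeeping, in which the ergodicity of the factor keeps the truncation uniform, is where the real work lies.
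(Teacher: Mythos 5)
Your outline correctly identifies the target criterion from Remark \ref{rem:factors}(a), and your part (i) and the product estimate for bounded generators are fine; but the proof has a genuine gap at exactly the point you flag yourself. The uniformly bounded approximation of elements of $\ol{A(X|Y)}\cap\Ell^\infty(X)$ is not ``bookkeeping'' to be deferred --- it \emph{is} the lemma. The paper's proof consists essentially of it: Step~1 shows that every $f\in A(X|Y)$ admits zonotopes with \emph{bounded} generators, and this is not a one-line Egorov argument, because Egorov only gives control on a set $B\subset Y$ with $\nu(B)>1-\veps$; the argument then needs ergodicity of the base to cover $Y$ by the images $\psi^l(B)$ and to glue the translated approximants $T^{-l}s_l$ into a single bounded function $\tilde f$ with $\|f-\tilde f\|_{\Ell^{2,\infty}(X|Y)}\leq 2\veps$, followed by a second truncation of the exceptional generators $b_j$. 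Step~2 then derives density of $A(X|Y)\cap\Ell^\infty(X)$ in $\ol{A(X|Y)}$ from the pointwise inequality $\bigl\|T^nf^+-\sum_j c_j^+f_j\bigr\|_{\Ell^{2,\infty}(X|Y)}\leq \bigl\|T^nf-\sum_j c_jf_j\bigr\|_{\Ell^{2,\infty}(X|Y)}$ after normalizing the generators to disjointly supported positive multiples of characteristic functions. Your sketch (``the generators can be replaced, off a set in $Y$ of arbitrarily small measure, by bounded truncated functions'') gestures at the right mechanism but supplies none of this construction, so the mathematical content of the lemma remains unproved.

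The detour through $E(X|Y)$ is moreover both circular and unhelpful. Circular: Remark \ref{rem:a=e} records $\ol{A(X|Y)}=\ol{E(X|Y)}$ as a theorem of Zorin-Kranich (regular systems) and Jamneshan (general case); those results are obtained by analyzing the compact extension of $Y$ generated by the conditionally almost periodic functions, i.e., they presuppose that $\ol{A(X|Y)}$ is induced by a factor --- the very statement at issue --- and in the paper's non-separable setting quoting them would in any case import machinery far heavier than the lemma itself (the paper's proof deliberately uses only elementary tools). Unhelpful: unlike $A(X|Y)$, the class $E(X|Y)$ is not closed under the truncation and positive-part operations you need for boundedness (the orbit of $f\Eins_{\{|f|\leq N\}}$ has no reason to lie in a finitely generated module), so your key density claim is, if anything, \emph{harder} for eigenfunctions than for almost periodic functions and would force you back to the direct argument anyway. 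Finally, the assertion that the module generated by the orbit ``is again finitely generated and, by a measurable selection from the orbit, admits generators drawn from the orbit'' is unsubstantiated: $\Ell^\infty(Y)$ is not Noetherian, so submodules of finitely generated modules need not be finitely generated, and making the ``measurable finite-rank bundle'' picture rigorous (fiberwise Gram--Schmidt, rank stratification of $Y$, measurable selection, all without separability) is precisely the kind of work the paper's zonotope argument is designed to avoid.
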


\begin{proof}
We proceed in three steps.

\underline{Step 1.} We first show that for every $f\in A(X|Y)$ one can take \emph{bounded} generators of the module zopotopes 
in the definition of conditional almost periodicity.  Let $\veps>0$ and let $f_1,\ldots,f_m\in\Ell^2(X|Y)$ satisfy 
$$
\orb(f):=\{T^nf:\,n\in\Z\}\subset U_\veps(Z(f_1,\ldots,f_m)),
$$
where we denote by $Z(f_1,\ldots,f_m)$ the module zonotope generated by $f_1,\ldots,f_m$ and by $U_\veps(Z)$ the $\veps$-neighborhood of a set $Z\subset\Ell^2(X|Y)$ with respect to the norm $\|\cdot\|_{\Ell^{2,\infty}(X|Y)}$.

Fix $j\in\{1,\ldots,m\}$ and consider for every $k\in \N$ the function $f_{j,k}:=f_j\Eins_{\{|f_j|\leq k\}}$ which is bounded by $k$. Since  the functions $f_{j,k}$ approximate $f_j$ in $\Ell^2(X)$, $\EY (|f_j-f_{j,k}|^2)$ converge to zero in $\Ell^1(Y)$ as $k\to\infty$. By Egorov's theorem (passing first to a subsequence if necessary), we see that there exists $k\in \N$ and a set $A_j\subset Y$ with $\nu(A_j)>1-\frac{\veps}m$ such that $\EY(|f_j-f_{j,k}|^2)<\frac{\veps}m$ on $A_j$. Write now $f_j=g_j+b_j+r_j$
with $g_j:=f_{j,k}$ being bounded by $k$, $b_j:=(f_j-f_{j,k})\Eins_{Y\setminus A_j}$ and the rest $r_j:=(f_j-f_{j,k})\Eins_{A_j}$. Since $\|r_j\|_{\Ell^{2,\infty}(X|Y)}\leq \frac{\veps}m$ by the definition of $k$, we have
\begin{equation}\label{eq:U2eps}
\orb(f)\subset U_{2\veps} (Z(g_1,\ldots,g_m,b_1,\ldots,b_m)).
\end{equation}
Denote by $\varphi$ and $\psi$ the underlying transformations on $X$ and $Y$, respectively. 
Consider the ``good'' set  $B:=\cap_{j=1}^mA_j$ satisfying $\nu(B)>1-\veps$. By \eqref{eq:U2eps} and since the functions $b_j$  vanish on $B$, for every $l\in\N_{0}$ there exist $s_l\in Z(g_1,\ldots,g_m)$ (which is then bounded by $M:=km$) such that 
$$
\|1_B T^lf-s_l\|_{\Ell^{2,\infty}(X|Y)}<2\veps
$$
which we can rewrite as 
\begin{equation}\label{eq:S^lBf}
\|1_{\psi^lB} f-T^{-l}s_l\|_{\Ell^{2,\infty}(X|Y)}<2\veps.
\end{equation}
Consider now the function
$$
\tilde{f}= \sum_{l=0}^\infty \Eins_{\psi^l(B)}T^{-l}s_l \Eins_{Y\setminus(\cup_{l'=0}^{l-1}\psi^l(B))}.
$$
Clearly, $\tilde{f}$ is bounded by $M$. Moreover, since  $\cup_{l=0}^\infty \psi^l(B)=Y$ up to a null set by the ergodicity of $(Y,\nu,S)$, \eqref{eq:S^lBf} implies
\begin{equation}\label{eq:f-tildef}
\|f-\tilde{f}\|_{\Ell^{2,\infty}(X|Y)}\leq 2\veps.
\end{equation}
In particular, $\tilde{f}$ satisfies by \eqref{eq:U2eps}
\begin{equation}\label{eq:U4eps}
\orb(\tilde{f})\subset U_{4\veps} (Z(g_1,\ldots,g_m,b_1,\ldots,b_m)).
\end{equation}
We now truncate the 
new generators as follows. Let $N:=\frac{M}{\veps}\sum_{j=1}^m \|b_j\|_{\Ell^{2,\infty}(X|Y)}$ and set $D:=\cap_{j=1}^m \{|b_j|\leq N\}\subset X$. By $\Eins_{X\setminus D}\leq \sum_{j=1}^m\frac{|b_j|}N$ we have 
\begin{equation}\label{eq:norm-x-d}
\| \Eins_{X\setminus D}\|_{\Ell^{2,\infty}(X|Y)}\leq \frac1N \sum_{j=1}^m \|b_j\|_{\Ell^{2,\infty}(X|Y)}=\frac{\veps}M. 
\end{equation}
By \eqref{eq:U4eps} for every $j\in\{1,\ldots,m\}$ and $n\in\Z$  there exist $c_{j,n},d_{j,n}\in \Ell^\infty(Y)$ bounded by $1$ such that 
$$
\left\|T^n\tilde{f} - \sum_{j=1}^mc_{j,n}g_j- \sum_{j=1}^md_{j,n}b_j\right\|_{\Ell^{2,\infty}(X|Y)}<4\veps.
$$
It follows, by adding and subtracting $\Eins_DT^n\tilde{f}$ and using \eqref{eq:norm-x-d} and Remark \ref{rem:AandW}(a), that
\begin{eqnarray*}
&\ &\left\|T^n\tilde{f} - \sum_{j=1}^mc_{j,n}g_j\Eins_D- \sum_{j=1}^md_{j,n}b_j\Eins_D\right\|_{\Ell^{2,\infty}(X|Y)}\\
&\ &\quad \leq \left\|T^n\tilde{f} - \sum_{j=1}^mc_{j,n}g_j- \sum_{j=1}^md_{j,n}b_j\right\|_{\Ell^{2,\infty}(X|Y)}+\|T^n\tilde{f}\|_{\Ell^\infty(X)}\| \Eins_{X\setminus D}\|_{\Ell^{2,\infty}(X|Y)}\\
&\ &\quad < 4\veps + M\frac{\veps}M=5\veps.
\end{eqnarray*}
This implies by \eqref{eq:f-tildef}
$$
\orb(f)\subset U_{7\veps}(Z(g_1\Eins_D,\ldots,g_m\Eins_D,b_1\Eins_D,\ldots,b_m\Eins_D)),
$$
with every generator of this zonotope being bounded by $\max\{k,N\}$. This completes the proof of Step 1. 

\underline{Step 2.} We now prove that $A(X|Y)\cap \Ell^\infty(X)$ is dense in $\ol{A(X|Y)}$.
To do this it is enough to show that for every $f\in A(X|Y)$  the function  $f^+:=\max\{0,f\}$ belongs to $A(X|Y)$ which by shifting, multiplying with $-1$ and applying the argument again implies conditional almost periodicity of the function $f\Eins_{\{|f|\leq N\}}$ for every $N\in \N$. 
Let $f\in A(X|Y)$ and $\veps>0$. By Step 1 we can assume that the orbit of $f$ belongs to an $\veps$-neighborhood (w.r.t.~$\|\cdot\|_{\Ell^{2,\infty}(X|Y)}$) of a module zonotope with bounded generators $f_1,\ldots,f_m$. Using $\|\cdot\|_{\Ell^{2,\infty}(X|Y)}\leq \|\cdot\|_{\Ell^\infty(X)}$, by changing to a $2\veps$-neighborhood we can assume 
that each $f_j$ is a finite linear combination of characteristic functions. Moreover, by 
enlarging $m$ we can further assume that each $f_j$ is a positive multiple of a characteristic function with supports being disjoint (up to null sets). After these modifications, by 
$$
\left\|T^n f^+ - \sum_{j=1}^m c_j^+f_j\right\|_{\Ell^{2,\infty}(X|Y)}\leq \left\|T^n f - \sum_{j=1}^m c_jf_j\right\|_{\Ell^{2,\infty}(X|Y)}
$$
we see that the orbit of $f^+$ belongs to the same neighborhood of the same module zonotope as the orbit of $f$ finishing the argument. 

\underline{Step 3.} It is clear that the set $A(X|Y)\cap \Ell^\infty(X)$ is conjugation invariant, contains $\Eins$ and is invariant under both $T$ and $T^{-1}$. Moreover, by Step 2 it is dense in $\ol{A(X|Y)}$. It thus remains to show that it is a subalgebra of $\Ell^\infty(X)$. Let $f,g\in A(X|Y)\cap \Ell^\infty(X)$ and assume without loss of generality that $\|g\|_\infty\leq 1$. Let further $\veps>0$ and  take $f_1,\ldots, f_m\in\Ell^2(X|Y)$ with $\orb(f)\in U_\frac{\veps}{2}(Z(f_1,\ldots,f_m))$. By Step 1 we can assume that $f_1,\ldots,f_m\in \Ell^\infty(X)$ and denote $M:=\max\{\|f_1\|_\infty,\ldots,\|f_m\|_\infty\}$. Let further $g_1,\ldots,g_l\in\Ell^2(X|Y)$ satisfy $\orb(g)\in U_{\frac{\veps}{2mM}}(Z(g_1,\ldots,g_l))$. Then for every $n\in\Z$ and appropriate $a_n\in Z(f_1,\ldots,f_m)$ and $b_n\in Z(g_1,\ldots,g_l)$ we have by the triangle inequality and Remark \ref{rem:AandW}(a) 
\begin{eqnarray*}
&\ &\|T^n(fg)-a_nb_n
\|_{\Ell^{2,\infty}(X|Y)}\leq \|T^nf-a_n\|_{\Ell^{2,\infty}(X|Y)}\|g\|_{\Ell^\infty(X)}\\
&\ &\quad +\|a_n\|_{\Ell^\infty(X)}\|T^ng-b_n \|_{\Ell^{2,\infty}(X|Y)}<\frac{\veps}2+mM\frac{\veps}{2mM}=\veps.
\end{eqnarray*}
Since each $a_nb_n$ belongs to the module zonotope generated by $f_jg_k$, $j\in\{1,\ldots,m\}$, $k\in\{1,\ldots,l\}$, this shows $fg\in A(X|Y)\cap \Ell^\infty(X)$. The proof is complete. 
\end{proof}

\begin{remark}
The factor inducing $\ol{A(X|Y)}$ is the maximal compact extension
of $Y$ in $X$ in the sense of Definition \ref{def:comp-ext} below. Moreover, by \eqref{eq:A=E} it coincides with the maximal isometric extension of $Y$ in $X$  introduced by Furstenberg, Weiss \cite{FW96} for ergodic systems.
\end{remark}

As a corollary we obtain the following property of $W(X|Y)$.
\begin{lemma}\label{lem:mul-Wbdd}
Let $(X,\mu,T)$ be a 
\mps\ with 
factor $(Y,\nu,S)$. Then $W(X|Y)\cap\Ell^\infty(X)$ is dense in $W(X|Y)$ w.r.t.~the $\Ell^2$-norm.
\end{lemma}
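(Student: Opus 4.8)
The plan is to exploit the orthogonal decomposition from Theorem \ref{mr:higher-JdLG} together with the factor structure of $\ol{A(X|Y)}$ established in Lemma \ref{lem:mul-E-factor}. By the conditional Jacobs-de Leeuw-Glicksberg decomposition \eqref{eq:rel-jdlg-a+w}, the space $W(X|Y)$ is precisely the orthogonal complement of $\ol{A(X|Y)}$ in $\Ell^2(X)$. I would denote by $P$ the orthogonal projection onto $\ol{A(X|Y)}$, so that $I-P$ is the orthogonal projection onto $W(X|Y)$.

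The crucial step is to observe that $P$ preserves boundedness. Indeed, by Lemma \ref{lem:mul-E-factor} the subspace $\ol{A(X|Y)}$ is induced by a factor, so that $P$ is a Markov operator acting as a contraction with respect to every $\Ell^p$-norm, $p\in[1,\infty]$. In particular $P$ maps $\Ell^\infty(X)$ into itself, and hence so does $I-P$, being the difference of two operators preserving boundedness. Consequently $(I-P)\Ell^\infty(X)\subset W(X|Y)\cap\Ell^\infty(X)$.

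Density would then follow directly. Given an arbitrary $g\in W(X|Y)$, I would choose a sequence $(f_n)\subset\Ell^\infty(X)$ with $f_n\to g$ in $\Ell^2(X)$, which is possible since $\Ell^\infty(X)$ is dense in $\Ell^2(X)$. Each $(I-P)f_n$ lies in $W(X|Y)\cap\Ell^\infty(X)$, while continuity of the orthogonal projection $I-P$ on $\Ell^2(X)$ together with $(I-P)g=g$ yields $(I-P)f_n\to g$ in $\Ell^2(X)$, exhibiting $g$ as an $\Ell^2$-limit of bounded conditionally weakly mixing functions.

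The main obstacle, and the only nontrivial point, is that the projection onto $W(X|Y)$ preserves boundedness; this is exactly where the factor structure of $\ol{A(X|Y)}$ from Lemma \ref{lem:mul-E-factor} is indispensable. A naive attempt via truncations $g\Eins_{\{|g|\leq N\}}$ would fail, since truncation does not preserve conditional weak mixing, so routing the approximation through the projection onto the complementary (boundedness-preserving) factor is what makes the argument go through.
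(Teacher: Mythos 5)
Your proposal is correct and follows essentially the same route as the paper's own proof: both approximate $g\in W(X|Y)$ by bounded functions $f_n$, apply the projection $I-P_{\ol{A(X|Y)}}$ onto $W(X|Y)$ coming from the decomposition \eqref{eq:rel-jdlg-a+w}, and use Lemma \ref{lem:mul-E-factor} to see that $P_{\ol{A(X|Y)}}$, being the projection onto a factor, is contractive w.r.t.\ the $\Ell^\infty$-norm, so that $(I-P_{\ol{A(X|Y)}})f_n$ is bounded and converges to $g$ in $\Ell^2(X)$. Your continuity argument for the convergence is the same as the paper's appeal to the Pythagoras theorem, and your closing remark about why naive truncation fails is a correct observation, though not needed for the proof.
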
 
\begin{proof}
Let $f\in W(X|Y)$ and take a sequence $(f_n)\subset \Ell^\infty(X)$ converging to $f$ in $\Ell^2(X)$. Consider the sequence $(P_{W(X|Y)}f_n)$
where $P_{W(X|Y)}$ denotes the orthogonal projection onto $W(X|Y)$. This sequence clearly belongs to $W(X|Y)$ and converges to $f$ by the Pythagoras theorem. Moreover, for every $n\in\N$ we have
$$
P_{W(X|Y)}f_n=f_n-P_{\ol{A(X|Y)}}f_n\in \Ell^\infty(X)
$$
by \eqref{eq:rel-jdlg-a+w} and Lemma \ref{lem:mul-E-factor}. (Recall that 
the projection onto a factor is 
contractive w.r.t.~the  $\Ell^\infty$-norm.)
\end{proof}

We now introduce the distal factors of finite order defined by Furstenberg \cite{Fu77} using conditional eigenfunctions instead of conditionally almost periodic functions (cf.~Remark \ref{rem:a=e}). 

\begin{definition}\label{def:mul-n}
Let $(X,\mu,T)$ be an ergodic 
\mps. We construct the sequence of factors inductively as follows. Start with $D_0:= \fix T=\C\Eins$ and the corresponding fixed (one-point) factor $\mathcal{D}_0$ 
and for every $k\in\N$ denote by $\mathcal{D}_{k}$ the factor 
inducing $D_k:=\ol{A(X|\mathcal{D}_{k-1})}$ 
(see Lemma \ref{lem:mul-E-factor}). We call $\mathcal{D}_k$ the \emph{distal factor of order $k$} of $(X,\mu,T)$.
\end{definition}

Furstenberg \cite{Fu77} showed that for regular, ergodic systems  for each $k\in\N$ the factor $\mathcal{D}_k$ (defined via conditional eigenfunctions) is \emph{characteristic for $k$-term multiple ergodic averages \eqref{eq:mult-erg-ave}} in the sense that both convergence and the limit of the averages \eqref{eq:mult-erg-ave} remain unchanged if we replace every function by its projection
onto this factor. See also Frantzikinakis \cite[Theorem 5.2]{Frantz04} who deduced it from Furstenberg \cite[Theorem 7.1]{Fu77} using the van der Corput trick. We will give an alternative proof of this fact here, see Propositions \ref{prop:ghk-control-mult-ave} and  \ref{prop:Z_k-ap} below.


\section{Gowers-Host-Kra seminorms}\label{sec:GHK-seminorms} 

The uniformity seminorms were introduced by Gowers \cite{Gowers} in his proof of Szemer\'edi's theorem via higher order Fourier analysis for rotations on cyclic gro\-ups $\Z_N$
and were extended by Host and Kra \cite{HK}\footnote{Initially Host, Kra \cite{HK} defined the seminorms using cube measure spaces. The two definitions are easily shown to be equivalent, see, e.g., Kra \cite[Lemma 7.4]{Kra} or \cite[Section 14.2]{EF-book}.} to arbitrary ergodic measure-preserving systems in their proof of multiple convergence.

\begin{definition}[Gowers-Host-Kra (uniformity) seminorms]
\label{def:GHK-1}
Let $(X,\mu,T)$ be an ergodic 
 \mps \ and $f\in L^\infty(X,\mu)$. The \emph{Gowers-Host-Kra} (or \emph{uniformity}) \emph{seminorms} are defined inductively by
\begin{eqnarray}
\|f\|_{U_1}&:=&
\left| \int_X f\,d\mu\right|
,\label{eq:u1}\\
\|f\|_{U_{l+1}}^{2^{l+1}}&:=&\limsup_{N\to\infty}\aveN \|T^nf\cdot \ol{f}\|_{U_l}^{2^l},\quad l\in\N.\label{eq:ghk-def-seminorms}
\end{eqnarray}
\end{definition}

\begin{remark}[Second uniformity seminorm]\label{rem:semin-erg-12}
For an ergodic measure-preserving system $(X,\mu,T)$ the second uniformity seminorm satisfies
\begin{eqnarray*}
\|f\|_{U_2}^4&=&\limsup_{N\to\infty}\aveN |\la T^nf,f\ra|^{2}.
\end{eqnarray*}
In particular,  $f\in\Ell^\infty(X)$ is weakly mixing if and only if $\|f\|_{U^2}=0$. 
\end{remark}


It is easy to see that the  Gowers-Host-Kra  seminorms are increasing and satisfy 
\begin{equation}\label{eq:U-Linfty}
\|\cdot\|_{U^k}\leq \|\cdot\|_{\Ell^\infty(X)}\quad\forall k\in\N.
\end{equation}
We need the following stronger property of these seminorms, see, e.g., \cite[Equation (12)]{ET}.

\begin{lemma}\label{lem:mul-uk-lp}
Let $(X,\mu,T)$ be an ergodic \mps\ and $k\in \N$. Then for $p_k:=\frac{2^k}{k+1}$ one has $\|f\|_{U^k}\leq \|f\|_{\Ell^{p_k}(X)}$ for every $f\in\Ell^\infty(X)$. 
\end{lemma}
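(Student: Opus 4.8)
The plan is to argue by induction on $k$ using the recursive definition \eqref{eq:ghk-def-seminorms}, reducing everything to one convolution-type estimate. The base case $k=1$ is immediate, since $p_1=1$ and $\|f\|_{U_1}=\left|\int_X f\,d\mu\right|\le\int_X|f|\,d\mu=\|f\|_{\Ell^1(X)}$. For the inductive step I would assume $\|g\|_{U^k}\le\|g\|_{\Ell^{p_k}(X)}$ for all $g\in\Ell^\infty(X)$ and apply it to $g=T^nf\cdot\ol f$ inside \eqref{eq:ghk-def-seminorms}. Since $\tfrac{2^k}{p_k}=k+1$ and $T$ is a Koopman operator (so $|T^nf|^{p_k}=T^nh$ for $h:=|f|^{p_k}\ge 0$), the defining average becomes
\[
\|f\|_{U^{k+1}}^{2^{k+1}}\le\limsup_{N\to\infty}\aveN\left(\int_X|T^nf|^{p_k}|f|^{p_k}\,d\mu\right)^{k+1}=\limsup_{N\to\infty}\aveN\la T^nh,h\ra^{k+1}.
\]
Writing $r:=\tfrac{2(k+1)}{k+2}$ and tracking exponents ($p_kr=p_{k+1}$ and $r(k+2)=2(k+1)$), the claim for $k+1$ reduces to the single estimate
\begin{equation}\label{eq:star}
\limsup_{N\to\infty}\aveN\la T^nh,h\ra^{k+1}\le\left(\int_X h^r\,d\mu\right)^{k+2},
\end{equation}
whose right-hand side equals $\|f\|_{\Ell^{p_{k+1}}(X)}^{2^{k+1}}$; taking $2^{k+1}$-th roots then closes the induction.

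To prove \eqref{eq:star} I would pass to the spectral side. By the spectral theorem for the unitary $T$ there is a finite positive measure $\sigma_h$ on $\T$ with $\la T^nh,h\ra=\int_\T z^n\,d\sigma_h(z)$; expanding the $(k+1)$-th power, interchanging sum and integral, and using $\aveN(z_1\cdots z_{k+1})^n\to\Eins[z_1\cdots z_{k+1}=1]$ by dominated convergence shows the $\limsup$ is a genuine limit equal to $\sigma_h^{\otimes(k+1)}\{\,z_1\cdots z_{k+1}=1\,\}$. Decomposing $\sigma_h=\sigma_{\mathrm{at}}+\sigma_{\mathrm{cont}}$, every term of the product measure involving at least one continuous factor vanishes (integrating $\Eins[z_i=\text{const}]$ against the non-atomic $\sigma_{\mathrm{cont}}$), so only the purely atomic contribution $\sum_{\lambda_1\cdots\lambda_{k+1}=1}\prod_i\sigma_h(\{\lambda_i\})$ survives. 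Since the atoms of $\sigma_h$ sit at the eigenvalues of $T$, this quantity is computed on the Kronecker factor, which by Halmos--von Neumann is a rotation on a compact abelian group $G$; by ergodicity the eigenfunctions are characters of modulus one, $\sigma_h(\{\lambda\})=|\hat{h_\kr}(\lambda)|^2$ for the projection $h_\kr$ of $h$ onto this factor, and the constraint $\lambda_1\cdots\lambda_{k+1}=1$ becomes additive in $\hat G$.

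The remainder is classical harmonic analysis on $G$: the surviving sum is the value at the origin of the $(k+1)$-fold convolution on $\hat G$ of $b:=|\hat{h_\kr}|^2$, and Young's inequality on $\ell^p(\hat G)$ (with exponent $\tfrac{k+1}{k}$, forced by $(k+1)\cdot\tfrac{k}{k+1}=k$) followed by the Hausdorff--Young inequality (with dual exponent $\tfrac{2(k+1)}{k}$, whose conjugate is exactly $r$) yields $\sum_{\lambda_1\cdots\lambda_{k+1}=1}\prod_i\sigma_h(\{\lambda_i\})\le\|h_\kr\|_{\Ell^r(G)}^{2(k+1)}$; finally $\|h_\kr\|_{\Ell^r}\le\|h\|_{\Ell^r}$ since the projection onto a factor is an $\Ell^r$-contraction, which is \eqref{eq:star}. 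I expect \eqref{eq:star} to be the crux: the soft route through the product system $(X^{k+1},\mu^{\otimes(k+1)},T\times\cdots\times T)$ fails, because that system need not be ergodic and the only available bound — contractivity of the conditional expectation onto its invariant factor — runs in the wrong direction. One is therefore forced to isolate the atomic (Kronecker) part of the spectrum as above and invoke Hausdorff--Young, where the genuinely delicate point is matching the exponents $p_k$, $r$, $\tfrac{k+1}{k}$ and $\tfrac{2(k+1)}{k}$ across the two inequalities.
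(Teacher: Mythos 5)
Your proof is correct. Note that the paper does not prove Lemma \ref{lem:mul-uk-lp} at all: it defers to \cite[Equation (12)]{ET}, and your argument --- induction via \eqref{eq:ghk-def-seminorms} with $h=|f|^{p_k}$ and the exponent identities $2^k/p_k=k+1$, $p_k r=p_{k+1}$, $2(k+1)/r=k+2$, followed by the spectral reduction to the atomic part of $\sigma_h$, the identification of the atoms with Fourier coefficients on the Kronecker factor, and Young plus Hausdorff--Young on the dual group --- is essentially the argument behind that citation. The bookkeeping all checks out, including the passage from $\limsup$ to a genuine limit via dominated convergence and the final $\Ell^r$-contractivity of the projection onto the Kronecker factor.

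Two small remarks. First, the paper explicitly avoids separability assumptions, while Halmos--von Neumann in its classical form requires them. This is harmless: either pass to the invariant sub-$\sigma$-algebra generated by $f$ as in Remark \ref{rem:factors}(b) (the seminorms and $\Ell^p$-norms are intrinsic to that subsystem), or realize the Kronecker factor directly as the rotation on the Pontryagin dual of the discrete group of eigenvalues of $T$; Young's and the Hausdorff--Young inequalities hold on arbitrary compact abelian groups, not just metrizable ones. Second, your closing claim that the ``soft route through the product system fails'' is overstated. By the mean ergodic theorem on $(X^{k+1},\mu^{\otimes(k+1)},T\times\cdots\times T)$ one has
$$
\limaveN \la T^nh,h\ra^{k+1}=\bigl\|P_{\mathrm{inv}}\,(h\otimes\cdots\otimes h)\bigr\|_{\Ell^2(X^{k+1})}^2,
$$
and for an ergodic base system the invariant functions of the $(k+1)$-fold product are spanned by the products $e_{\lambda_1}\otimes\cdots\otimes e_{\lambda_{k+1}}$ of eigenfunctions with $\lambda_1\cdots\lambda_{k+1}=1$; this reproduces exactly your atomic sum. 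So the product route is not blocked --- it requires the same spectral identification you perform, after which the same Young/Hausdorff--Young step is still the crux; the two routes are the same proof in different clothing.
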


Let $(X,\mu,T)$ be an ergodic \mps. 
Consider for every $k\in \N_{0}$ the orthogonal decomposition
\begin{equation}\label{eq:hk-dec}
\Ell^2(X)=Z_k\oplus \ol{\{f\in\Ell^\infty(X):\ \|f\|_{U^{k+1}}=0\}}
\end{equation}
known as the \emph{Host-Kra decomposition} of order $k$, where $Z_k$ is at first defined as the orthogonal complement of the second part. In particular, we have $Z_0=\C\Eins=\Fix T$ by ergodicity and the decomposition \eqref{eq:hk-dec} for $k=0$ coincides with the von Neumann decomposition \eqref{eq:mul-vN-dec}.
Moreover, for $k=1$ \eqref{eq:hk-dec} coincides with the 
Jacobs-de Leeuw-Glicksberg decomposition \eqref{eq:jdlg-dec} by 
Remark \ref{rem:semin-erg-12}
and Lemma \ref{lem:mul-Wbdd} applied to the one-point factor. 

The following property shows the relevance of the uniformity seminorms for multiple convergence and recurrence. It is an easy consequence of the definition of the Gowers-Host-Kra seminorms and the van der Corput inequality, see, e.g., \cite[Section 14.1]{EF-book}. 

\begin{proposition}[Generalized von Neumann theorem]
\label{prop:ghk-control-mult-ave}
Let $(X,\mu,T)$ be an ergodic measure-preserving system, $k\in\N$ and $f_1,\ldots,f_k\in\Ell^\infty(X)$. Then 
\begin{equation}\label{eq:gener-vN-ineq}
\limN 
\left\|\aveN T^nf_1\cdot T^{2n}f_2\cdots T^{kn}f_k\right\|_2\leq \min_{j=1,\ldots,k}j\|f_j\|_{U^k}.
\end{equation}
\end{proposition}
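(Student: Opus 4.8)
The plan is to argue by induction on $k$, using the van der Corput inequality to reduce the $k$-term average to $(k-1)$-term averages, and the defining recursion \eqref{eq:ghk-def-seminorms} of the seminorms to turn the resulting expressions back into $\|\cdot\|_{U^k}$. Normalizing so that $\|f_i\|_{\Ell^\infty(X)}\le 1$ for all $i$ (the general bound follows upon inserting the factors $\prod_{i\ne j}\|f_i\|_{\Ell^\infty(X)}$), it suffices to prove, for each fixed $j$, an estimate $\limN\big\|\aveN T^nf_1\cdots T^{kn}f_k\big\|_2\le C_{k,j}\|f_j\|_{U^k}$ with $C_{k,j}\le j$, and then take the minimum over $j$. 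For $k=1$ the mean ergodic theorem together with ergodicity gives $\aveN T^nf_1\to\big(\int_X f_1\,d\mu\big)\Eins$ in $\Ell^2(X)$, so the limit norm equals $\big|\int_X f_1\,d\mu\big|=\|f_1\|_{U^1}$ by \eqref{eq:u1}; this is the base case, with constant $1$.

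For the inductive step put $u_n:=T^nf_1\cdots T^{kn}f_k$, a bounded sequence in $\Ell^2(X)$. The van der Corput inequality gives
\[
\limsup_N\Big\|\aveN u_n\Big\|_2^2\le\limsup_{H\to\infty}\frac1H\sum_{h=1}^H\limsup_N\Big|\aveN\la u_{n+h},u_n\ra\Big|.
\]
Since the Koopman operator is multiplicative on $\Ell^\infty(X)$, one computes
\[
\la u_{n+h},u_n\ra=\int_X\prod_{i=1}^k T^{in}\big(g_i^{(h)}\big)\,d\mu,\qquad g_i^{(h)}:=T^{ih}f_i\cdot\ol{f_i},
\]
with $\|g_i^{(h)}\|_{\Ell^\infty(X)}\le 1$. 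Applying the (integral-preserving) operator $T^{-\ell n}$ for a suitable endpoint index $\ell$ turns the $i=\ell$ factor into the constant-in-$n$ function $g_\ell^{(h)}$, which I pull out by Cauchy--Schwarz; using $\|g_\ell^{(h)}\|_2\le 1$ this bounds $\big|\aveN\la u_{n+h},u_n\ra\big|$ by the $\Ell^2$-norm of a $(k-1)$-term average of the remaining $g_i^{(h)}$ for the system $(X,\mu,T)$ or $(X,\mu,T^{-1})$, the seminorms being invariant under $T\mapsto T^{-1}$. Choosing $\ell$ so that the datum of $f_j$ survives at an endpoint of the shorter progression, the induction hypothesis bounds this by a constant multiple of $\|g_j^{(h)}\|_{U^{k-1}}=\|T^{jh}f_j\cdot\ol{f_j}\|_{U^{k-1}}$.

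It remains to average over $h$ and recognise $\|f_j\|_{U^k}$. By Jensen's (power-mean) inequality,
\[
\frac1H\sum_{h=1}^H\|T^{jh}f_j\cdot\ol{f_j}\|_{U^{k-1}}\le\Big(\frac1H\sum_{h=1}^H\|T^{jh}f_j\cdot\ol{f_j}\|_{U^{k-1}}^{2^{k-1}}\Big)^{1/2^{k-1}},
\]
and, since sampling a nonnegative Ces\`aro-summable sequence along the progression $h\mapsto jh$ inflates its upper average by at most the factor $j$, the right-hand side is at most $\big(j\,\|f_j\|_{U^k}^{2^k}\big)^{1/2^{k-1}}=j^{1/2^{k-1}}\|f_j\|_{U^k}^2$ by the defining recursion \eqref{eq:ghk-def-seminorms}. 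Feeding this back yields the desired $\limN\|\aveN u_n\|_2\le C_{k,j}\|f_j\|_{U^k}$, where $C_{k,j}$ is a product of the (bounded) induction constants with a fractional power of $j$; one checks $C_{k,j}\le j$, and the minimum over $j$ is \eqref{eq:gener-vN-ineq}.

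The main obstacle is the bookkeeping of these constants and exponents through the iterated van der Corput reduction: at each step one must choose which endpoint function to eliminate so that $f_j$ is retained at a controllable position, keep track of the fixed shift $jh$ attached to $f_j$ (which is exactly what produces the linear factor $j$ via the arithmetic-progression averaging), and use that the seminorms are unchanged on passing between $T$ and $T^{-1}$. None of these points is deep, but arranging them so that the accumulated constant is genuinely $\le j$ is the only delicate issue; the analytic inputs are merely Cauchy--Schwarz, Jensen's inequality, and the van der Corput estimate.
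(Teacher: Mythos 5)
Your overall route -- induction on $k$, the van der Corput inequality, elimination of an endpoint via $T^{-\ell n}$ and Cauchy--Schwarz, then Jensen's inequality and the progression-sampling estimate to invoke the recursion \eqref{eq:ghk-def-seminorms} -- is exactly the argument the paper has in mind: it gives no proof of Proposition \ref{prop:ghk-control-mult-ave}, describing it as an easy consequence of the definition of the seminorms and the van der Corput inequality and citing \cite[Section 14.1]{EF-book}. You also correctly noticed that the statement implicitly assumes the normalization $\|f_i\|_{\Ell^\infty(X)}\leq 1$ (by homogeneity the inequality is false otherwise). However, the one step you wave through, ``one checks $C_{k,j}\leq j$'', is precisely where the argument breaks, and it genuinely fails for $j=1$. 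Track your own recursion: if $C_{m,p}$ denotes the constant for the tracked function at position $p$ in an $m$-term average, your scheme gives $C_{m,p}=C_{m-1,p'}^{1/2}\,p^{1/2^{m}}$, where $p'$ is the position after an endpoint elimination ($p'=p-1$ for bottom, $p'=m-p$ for top). For $j=1$ you are forced to eliminate the top index (you must keep $f_1$), which after reversing to $T^{-1}$ places the datum of $f_1$ at position $k-1$, the \emph{top} of the shorter progression; from the top you are again forced to eliminate the bottom, so the positions along the path are $1,\,k-1,\,k-2,\ldots,2$, and unfolding the recursion yields
\begin{equation*}
C_{k,1}=\bigl((k-1)!\bigr)^{1/2^{k}},
\end{equation*}
which exceeds $1$ for every $k\geq 3$ (e.g.\ $2^{1/8}$ for $k=3$). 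So the $j=1$ term of the minimum in \eqref{eq:gener-vN-ineq} is not obtained. Relatedly, your phrase ``so that the datum of $f_j$ survives at an endpoint of the shorter progression'' is generally unachievable: after one endpoint elimination the datum sits at position $j-1$ or $k-j$, an interior position for $2\leq j\leq k-1$. That in itself is harmless, since the induction hypothesis covers every position, but it obscures exactly the bookkeeping that then fails at $j=1$.

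What your argument does prove is \eqref{eq:gener-vN-ineq} with the factor $j$ replaced by the path product $\bigl(j!\,(k-j)!\bigr)^{1/2^{k}}$, which is bounded by an absolute constant (less than $2$) and is indeed $\leq j$ for every $j\geq 2$; only the claimed constant $1$ for $j=1$ is out of reach of iterated endpoint elimination. This weaker form suffices for everything the paper does with the proposition: Corollary \ref{cor:ghk-control-mult-ave} and the multiple recurrence argument only use that the averages vanish when $\|f_j\|_{U^k}=0$, so the value of the constant is irrelevant there. But as a proof of the literal statement the proposal is incomplete: obtaining the constant $j$, and in particular the constant $1$ at $j=1$, requires a more careful argument than the one you outline (cf.\ Host, Kra \cite[Theorem 12.1]{HK}, where the inequality is proved with this constant under the normalization $\|f_i\|_{\Ell^\infty(X)}\leq 1$). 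You should either supply that refinement or state the bound with an unspecified constant $C(k,j)$, noting explicitly that the application in the paper is unaffected. (A minor point: the $\lim_{N\to\infty}$ on the left of \eqref{eq:gener-vN-ineq} should be read as $\limsup$ within the proof, since existence of the limit is the Host--Kra convergence theorem; you implicitly do this.)
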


\begin{corollary}[The subspace $Z_{k-1}$ is characteristic for $k$-term multiple averages]\label{cor:ghk-control-mult-ave}
Let $(X,\mu,T)$ be an ergodic measure-preserving system, $k\in\N$ and $f_1,\ldots,f_k\in\Ell^\infty(X)$. Then 
$$
\limaveN T^nf_1\cdot T^{2n}f_2\cdots T^{kn}f_k=0 \quad \text{in }\Ell^2(X)
$$
holds whenever $f_j\perp Z_{k-1}$ for some $j\in\{1,\ldots,k\}$.
\end{corollary}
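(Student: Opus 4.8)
The plan is to combine the Host-Kra decomposition \eqref{eq:hk-dec} (of order $k-1$) with the generalized von Neumann inequality of Proposition \ref{prop:ghk-control-mult-ave}. The only genuine subtlety is that the latter applies to \emph{bounded} functions with vanishing $U^k$-seminorm, whereas the hypothesis $f_j\perp Z_{k-1}$ only places $f_j$ in an $\Ell^2$-closure; bridging this gap is the heart of the argument.

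First I would unpack the hypothesis. By \eqref{eq:hk-dec} with $k$ replaced by $k-1$, the orthogonal complement of $Z_{k-1}$ is exactly $\overline{N_k}$, where $N_k:=\{g\in\Ell^\infty(X):\ \|g\|_{U^k}=0\}$ and the closure is taken in $\Ell^2(X)$. Hence $f_j\perp Z_{k-1}$ is equivalent to $f_j\in\overline{N_k}$, and I may fix a sequence $(g_m)\subset N_k$ with $g_m\to f_j$ in $\Ell^2(X)$.

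The key step is a multilinear continuity estimate that is \emph{uniform in} $N$. Replacing $f_j$ by $g_m$ in the average, applying the triangle inequality, bounding all factors except the $j$-th in $\Ell^\infty$, and using that $T$ is an $\Ell^2$-isometry, I obtain for every $N$
\begin{equation*}
\left\|\aveN T^nf_1\cdots T^{jn}(f_j-g_m)\cdots T^{kn}f_k\right\|_2\le \Big(\prod_{i\neq j}\|f_i\|_{\Ell^\infty(X)}\Big)\,\|f_j-g_m\|_{\Ell^2(X)},
\end{equation*}
since each summand has $\Ell^2$-norm at most $\prod_{i\neq j}\|f_i\|_{\Ell^\infty(X)}\cdot\|T^{jn}(f_j-g_m)\|_{\Ell^2(X)}$ and $\|T^{jn}(f_j-g_m)\|_{\Ell^2(X)}=\|f_j-g_m\|_{\Ell^2(X)}$. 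Crucially, the right-hand side is independent of $N$.

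Finally, for each fixed $m$ the functions $f_1,\ldots,g_m,\ldots,f_k$ are all bounded, so Proposition \ref{prop:ghk-control-mult-ave} applies and gives $\limN\|\aveN T^nf_1\cdots T^{jn}g_m\cdots T^{kn}f_k\|_2\le j\|g_m\|_{U^k}=0$. Combining this with the displayed estimate yields
\begin{equation*}
\limsup_{N\to\infty}\left\|\aveN T^nf_1\cdots T^{kn}f_k\right\|_2\le \Big(\prod_{i\neq j}\|f_i\|_{\Ell^\infty(X)}\Big)\,\|f_j-g_m\|_{\Ell^2(X)},
\end{equation*}
and letting $m\to\infty$ drives the right-hand side to $0$, so the averages converge to $0$ in $\Ell^2(X)$. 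The main obstacle is exactly the mismatch between the $\Ell^2$-closure in the hypothesis and the $\Ell^\infty$-domain of $\|\cdot\|_{U^k}$, and it is the uniform-in-$N$ estimate that lets the approximation pass through the limit. (Alternatively, Lemma \ref{lem:mul-uk-lp}, which bounds $\|\cdot\|_{U^k}$ by an $\Ell^{p_k}$-norm with $p_k<2$, could be used to extend the seminorm continuously to $\Ell^2(X)$ and conclude $\|f_j\|_{U^k}=0$ directly.)
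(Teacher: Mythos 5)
Your main argument is correct and is essentially the paper's own proof: using the Host--Kra decomposition \eqref{eq:hk-dec} you approximate $f_j$ in $\Ell^2(X)$ by bounded functions $g_m$ with $\|g_m\|_{U^k}=0$, apply Proposition \ref{prop:ghk-control-mult-ave} to the term with $g_m$, and bound the error term uniformly in $N$ by $\bigl(\prod_{i\neq j}\|f_i\|_{\Ell^\infty(X)}\bigr)\|f_j-g_m\|_{\Ell^2(X)}$, exactly the (tersely stated) triangle-inequality step in the paper. One caveat: your parenthetical alternative is flawed, since $p_k=\frac{2^k}{k+1}$ satisfies $p_k<2$ only for $k\leq 2$ (with $p_3=2$ and $p_k>2$ for $k\geq 4$), so Lemma \ref{lem:mul-uk-lp} does not yield an $\Ell^2$-continuous extension of $\|\cdot\|_{U^k}$ in general, and the approximation argument you gave is genuinely needed.
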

\begin{proof}
Assume without loss of generality that $\|f_1\|_\infty\leq 1,\ldots,\|f_k\|_\infty\leq 1$ and let $f_j\perp Z_{k-1}$ for some $j\in\{1,\ldots,k\}$. By the Host-Kra decomposition \eqref{eq:hk-dec} there exists a sequence $(g_m)\subset \Ell^\infty(X)$ satisfying $\|g_m\|_{U^k}=0$ for all $m\in\N$ with $\lim_{m\to\infty}\|f_j-g_m\|_2=0$. Thus for every $m\in\N$, by decomposing $f_j=(f_j-g_m)+g_m$, we have by \eqref{eq:gener-vN-ineq} and the triangle inequality 
\begin{eqnarray*}
\limsup_{N\to\infty}\left\|\aveN T^nf_1\cdot T^{2n}f_2\cdots T^{kn}f_k\right\|_2\leq \|f_j-g_m\|_{\Ell^2(X)}.
\end{eqnarray*}
Letting $m\to\infty$ proves the assertion.
\end{proof}


The following property of the subspaces $Z_k$ can be shown via constructing the factors using cube measure spaces, see Host, Kra \cite{HK},  \cite[Chapter 9]{HK-book} and, e.g., \cite[Section 14.4]{EF-book}. We will not need this fact and include it here for completeness.

\begin{proposition}\label{prop:HKZ-factors}
For every $k\in\N_{0}$ the subspace $Z_k$ is induced by a factor 
 $\mathcal{Z}_k$, called the \emph{Host-Kra-Ziegler factor of order $k$}. 
\end{proposition}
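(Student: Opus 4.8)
The plan is to use the characterization at the end of Remark \ref{rem:factors}(a): a closed subspace $\A\subseteq\Ell^2(X)$ is induced by a factor exactly when $\A\cap\Ell^\infty(X)$ is a conjugation invariant, $T$- and $T^{-1}$-invariant subalgebra of $\Ell^\infty(X)$ that contains $\Eins$ and is dense in $\A$. So I would verify these properties for $\A=Z_k$. Writing $N_{k+1}:=\ol{\{f\in\Ell^\infty(X):\ \|f\|_{U^{k+1}}=0\}}$, so that $Z_k=N_{k+1}^\perp$ by \eqref{eq:hk-dec}, I would first dispose of the routine properties.

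First I would record, by induction on the order straight from the inductive definition \eqref{eq:u1}--\eqref{eq:ghk-def-seminorms} and the measure preservation of $T$, that
\[
\|\ol f\|_{U^{k+1}}=\|f\|_{U^{k+1}},\qquad \|Tf\|_{U^{k+1}}=\|T^{-1}f\|_{U^{k+1}}=\|f\|_{U^{k+1}},\qquad \|\Eins\|_{U^{k+1}}=1 .
\]
Thus the condition $\|f\|_{U^{k+1}}=0$ is stable under conjugation and under $T^{\pm1}$, so $N_{k+1}$, and hence $Z_k$, are conjugation invariant and $T$- and $T^{-1}$-invariant; moreover, since the seminorms are increasing the subspaces $Z_k$ increase in $k$, whence $\Eins\in Z_0=\Fix T\subseteq Z_k$. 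This settles everything except multiplicativity of $Z_k\cap\Ell^\infty(X)$ and its density in $Z_k$.

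The hard part is precisely the subalgebra property: that $f,g\in Z_k\cap\Ell^\infty(X)$ forces $fg\in Z_k$. The linear conditions cutting out $N_{k+1}$ give no direct handle on products, so the multiplicative structure has to be imported from outside. I would do this through the Host--Kra cube construction: build inductively the cube measures $\mu^{[k+1]}$ on $X^{2^{k+1}}$ as iterated relatively independent self-joinings over the invariant $\sigma$-algebra of $T^{\times 2^{k}}$, rewrite $\|\cdot\|_{U^{k+1}}^{2^{k+1}}$ as the associated Gowers inner product, and exploit the Gowers--Cauchy--Schwarz inequality. This yields a $T$-invariant sub-$\sigma$-algebra $\mathcal{Z}_k$ (generated by the dual functions of the cube) together with the identification
\[
\|f\|_{U^{k+1}}=0\iff \E(f\,|\,\mathcal{Z}_k)=0\qquad(f\in\Ell^\infty(X)).
\]
Because $\mathcal{Z}_k$ is a genuine sub-$\sigma$-algebra, $\Ell^\infty(X,\mathcal{Z}_k,\mu)$ is automatically an algebra and is $\Ell^2$-dense in $\Ell^2(X,\mathcal{Z}_k,\mu)$; the equivalence above then identifies $Z_k=N_{k+1}^\perp$ with $\Ell^2(X,\mathcal{Z}_k,\mu)$, so $Z_k$ is induced by the factor $\mathcal{Z}_k$. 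I expect the cube construction producing this equivalence to be the only genuinely nontrivial input.

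As an alternative staying closer to the machinery of this note, I could argue by induction on $k$: the base cases $Z_0=\Fix T$ and, by \eqref{eq:jdlg-dec}, $Z_1$ (the Kronecker factor) are immediate, and in the inductive step, once $\mathcal{Z}_{k-1}$ is an ergodic factor, Lemma \ref{lem:mul-E-factor} already shows that $\ol{A(X|\mathcal{Z}_{k-1})}$ is induced by a factor. It would then remain to prove $Z_k=\ol{A(X|\mathcal{Z}_{k-1})}$ --- exactly the Host--Kra--Ziegler statement that $\mathcal{Z}_k$ is a compact extension of $\mathcal{Z}_{k-1}$. Either way the compact-extension (equivalently, cube) input is the main obstacle.
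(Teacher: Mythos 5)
Your main route is exactly the one the paper itself points to: the paper gives no proof of Proposition \ref{prop:HKZ-factors}, stating only that it ``can be shown via constructing the factors using cube measure spaces'' with references to Host--Kra, and your sketch (cube measures $\mu^{[k+1]}$, the Gowers inner product, and the identification $\|f\|_{U^{k+1}}=0\iff \E(f\,|\,\mathcal{Z}_k)=0$ for a genuine sub-$\sigma$-algebra, which yields the algebra and density properties automatically) is a correct outline of that cited construction. Your routine verifications (conjugation and $T^{\pm1}$-invariance of the seminorms, $\Eins\in Z_0\subseteq Z_k$) are sound, and your alternative inductive route correctly identifies the compact-extension statement $Z_k=\ol{A(X|\mathcal{Z}_{k-1})}$ as the missing input there --- precisely the stronger property of Host--Kra and Ziegler that the paper mentions in its introduction but deliberately avoids.
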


In particular, $\mathcal{Z}_0$ is the one-point factor and $\mathcal{Z}_1$ is the Kronecker factor. 
The Host-Kra-Ziegler factors 
were introduced by Host and Kra \cite{HK} and subsequently independently by Ziegler \cite{Ziegler} (see Leibman \cite{Le} for the equality of the two constructions) in their proofs for multiple convergence. 
We refer to Host, Kra \cite{HK,HK-book} for a detailed analysis of these factors and the deep structure theorem which states that for ergodic regular systems each $\mathcal{Z}_k$ is an inverse limit of nilsystems of step $k$. Again, we will not need anything from this theory 
here.

\begin{remark}[Multiple recurrence for weakly mixing systems]
For weakly mixing systems\footnote{A \mps\ $(X,\mu,\varphi)$ is called \emph{weakly mixing} if $\limaveN |\mu(A\cap \varphi^{-n}B)-\mu(A)\mu(B)|=0$ for every measurable $A,B\subset X$. This is equivalent to the orthogonal decomposition $\Ell^2(X)=\C\Eins\oplus \{\text{weakly mixing functions}\}$.} one easily shows using Remark \ref{rem:semin-erg-12} and induction  that  all Gowers-Host-Kra seminorms are equal to the first seminorm (see, e.g., Kra \cite[Section 7.3]{Kra}). Thus in this case $Z_k=\C\Eins$ holds for every $k\in\N$ 
and both multiple convergence and multiple recurrence follow from Corollary \ref{cor:ghk-control-mult-ave}
with 
\begin{equation*}
\lim_{N\to\infty}\aveN \int_X f_0\cdot T^nf_1\cdot T^{2n}f_2 \cdots T^{kn}f_k\,d\mu=\prod_{j=0}^k\int_Xf_j\,d\mu
\end{equation*}
recovering a result of Furstenberg \cite[Equation (1)]{Fu77}.
\end{remark}

\section{Host-Kra-Ziegler factors versus distal factors \\
and end of the proof}


We now connect the Host-Kra-Ziegler factors to distal factors of the corresponding order using the conditional Jacobs-de Leeuw-Glicksberg decomposition (Theorem \ref{mr:higher-JdLG}) and Lemma \ref{lem:mul-uk-lp}.

\begin{proposition}[
Distal factors are extensions of Host-Kra-Ziegler factors]\label{prop:Z_k-ap}
Let $(X,\mu,T)$ be an ergodic \mps. 
Then 
$Z_{k}\subset D_{k}$ for every $k\in \N$. 
\end{proposition}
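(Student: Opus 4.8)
The plan is to prove $Z_k\subset D_k$ by induction on $k$, reformulating it through orthogonal complements. By the Host--Kra decomposition \eqref{eq:hk-dec} we have $Z_k^\perp=\ol{\{f\in\Ell^\infty(X):\|f\|_{U^{k+1}}=0\}}$, and by the conditional Jacobs--de Leeuw--Glicksberg decomposition (Theorem \ref{mr:higher-JdLG}) applied to the factor $\mathcal{D}_{k-1}$, since $D_k=\ol{A(X|\mathcal{D}_{k-1})}$, we have $D_k^\perp=W(X|\mathcal{D}_{k-1})$. Hence $Z_k\subset D_k$ is equivalent to $W(X|\mathcal{D}_{k-1})\subset Z_k^\perp$. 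Because the bounded conditionally weakly mixing functions are dense in $W(X|\mathcal{D}_{k-1})$ by Lemma \ref{lem:mul-Wbdd} and $Z_k^\perp$ is closed, it suffices to show that every $f\in W(X|\mathcal{D}_{k-1})\cap\Ell^\infty(X)$ satisfies $\|f\|_{U^{k+1}}=0$ (such $f$ then lies in $Z_k^\perp$ by \eqref{eq:hk-dec}). For the base case $k=1$ this is immediate: by Remark \ref{rem:AandW}(f) such an $f$ is weakly mixing, and by Remark \ref{rem:semin-erg-12} this is equivalent to $\|f\|_{U^2}=0$.

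For the inductive step assume $Z_{k-1}\subset D_{k-1}$ and fix $f\in W(X|\mathcal{D}_{k-1})\cap\Ell^\infty(X)$. Writing $g_n:=T^nf\cdot\ol{f}$ and $Y:=\mathcal{D}_{k-1}$, the defining recursion \eqref{eq:ghk-def-seminorms} gives
$$
\|f\|_{U^{k+1}}^{2^{k+1}}=\limsup_{N\to\infty}\aveN\|g_n\|_{U^k}^{2^k}.
$$
The key step is the inequality $\|g_n\|_{U^k}\le\|P_Y g_n\|_{U^k}$, where $P_Y$ is the projection onto the factor $\mathcal{D}_{k-1}$. Indeed, since $\mathcal{D}_{k-1}$ is a factor, $P_Y$ is a contraction on $\Ell^\infty(X)$, so both $P_Y g_n$ and $g_n-P_Y g_n$ are bounded; moreover $g_n-P_Y g_n\perp D_{k-1}$, and the induction hypothesis $Z_{k-1}\subset D_{k-1}$ yields $g_n-P_Y g_n\in D_{k-1}^\perp\subset Z_{k-1}^\perp$. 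As recorded in the final paragraph, bounded functions in $Z_{k-1}^\perp$ have vanishing $U^k$-seminorm, so $\|g_n-P_Y g_n\|_{U^k}=0$ and the inequality follows from the triangle inequality for the seminorm. Finally, by \eqref{eq:EY} and the definition of the conditional scalar product, $P_Y g_n=J\EY(T^nf\cdot\ol{f})=J\la T^nf,f\ra_{\Ell^2(X|Y)}$.

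It remains to estimate the average. By Lemma \ref{lem:mul-uk-lp} and the fact that the Markov embedding $J$ is an isometry for every $\Ell^p$-norm,
$$
\|P_Y g_n\|_{U^k}\le\|P_Y g_n\|_{\Ell^{p_k}(X)}=\|\la T^nf,f\ra_{\Ell^2(X|Y)}\|_{\Ell^{p_k}(Y)},\qquad p_k=\frac{2^k}{k+1}.
$$
Set $C:=\|f\|_{\Ell^\infty(X)}^2$, so that $\|\la T^nf,f\ra_{\Ell^2(X|Y)}\|_{\Ell^\infty(Y)}\le C$. Since $2^k/p_k=k+1$, raising the displayed inequality to the power $2^k$ and using $\int_Y|u|^{p_k}\le C^{p_k-1}\int_Y|u|$ for $\|u\|_{\Ell^\infty(Y)}\le C$ together with $\|u\|_{\Ell^1(Y)}\le C$ bounds $\|P_Y g_n\|_{U^k}^{2^k}$ by a constant depending only on $C$ and $k$ times $\|\la T^nf,f\ra_{\Ell^2(X|Y)}\|_{\Ell^1(Y)}$. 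As $f$ is conditionally weakly mixing, $\aveN\|\la T^nf,f\ra_{\Ell^2(X|Y)}\|_{\Ell^1(Y)}\to 0$ as $N\to\infty$, whence $\|f\|_{U^{k+1}}=0$, completing the induction.

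The one genuinely nontrivial ingredient, used above, is that every bounded $h\in Z_{k-1}^\perp$ satisfies $\|h\|_{U^k}=0$. When $p_k\le 2$ (that is, $k\le 3$) this follows at once from Lemma \ref{lem:mul-uk-lp}: by \eqref{eq:hk-dec} there are $h_m\in\Ell^\infty(X)$ with $\|h_m\|_{U^k}=0$ and $h_m\to h$ in $\Ell^2(X)$; since $p_k\le 2$ this convergence also holds in $\Ell^{p_k}(X)$, so that $\|h\|_{U^k}\le\|h-h_m\|_{U^k}+\|h_m\|_{U^k}=\|h-h_m\|_{U^k}\le\|h-h_m\|_{\Ell^{p_k}(X)}\to 0$. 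For $p_k>2$ one must first replace the $h_m$ by uniformly bounded functions, and this is exactly the standard description of the subspace $Z_{k-1}$ in terms of the Gowers--Host--Kra seminorm that we take as input. I expect this to be the main obstacle: it is the only step requiring care, and, notably, it does not use that $Z_{k-1}$ is induced by a factor.
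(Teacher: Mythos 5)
Your argument is, in outline and in every estimate, the paper's own proof: reduce via \eqref{eq:hk-dec} and Lemma \ref{lem:mul-Wbdd} to showing that bounded conditionally weakly mixing functions have vanishing uniformity seminorm; split $g_n=T^nf\cdot\ol{f}$ using the projection onto the distal factor; bound the seminorm of the projected part by Lemma \ref{lem:mul-uk-lp}, pass to the $\Ell^1(Y)$-norm of $\la T^nf,f\ra_{\Ell^2(X|Y)}$, and average using conditional weak mixing (the paper normalizes $\|f\|_\infty\le 1$ rather than carrying your constant $C$, but the interpolation is the same). The one point where you diverge is the inductive bookkeeping, and that is exactly where the gap you flagged sits --- and it is a genuine gap for $k\ge 4$. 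Because you run the induction on the weak statement $Z_{k-1}\subset D_{k-1}$, all you know about $g_n-P_Yg_n$ is that it lies in $Z_{k-1}^\perp$, and you then need ``bounded $h\in Z_{k-1}^\perp$ implies $\|h\|_{U^k}=0$.'' Your argument for $p_k\le 2$ is fine, but for $p_k>2$ this claim is not available in the paper's framework: here $Z_{k-1}$ is \emph{defined} as the orthogonal complement of the $\Ell^2$-closure of $\{h\in\Ell^\infty(X):\|h\|_{U^k}=0\}$, and upgrading membership of a bounded function in that closure to actual vanishing of its seminorm requires uniformly bounded approximants --- in practice the dual-function/cube-measure characterization of the Host--Kra factors, i.e., precisely the machinery the paper is structured to avoid (it stresses it uses nothing about $Z_k$ beyond the seminorm description, not even that $Z_k$ is a factor). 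Truncating your $h_m$ does not work, since truncation does not control $\|\cdot\|_{U^k}$.

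The gap is self-inflicted and the repair is exactly the paper's proof: strengthen the induction hypothesis to the statement your induction actually establishes at each stage, namely that every bounded $h\in W(X|\DD_{k-2})$ satisfies $\|h\|_{U^{k}}=0$ (this is \eqref{eq:mul-uk+2=0} in the paper, shifted in index). Applying Theorem \ref{mr:higher-JdLG} to the factor $\DD_{k-2}$ gives $\Ell^2(X)=D_{k-1}\oplus W(X|\DD_{k-2})$, so $g_n-P_Yg_n=P_{W(X|\DD_{k-2})}g_n$ is a \emph{bounded} element of $W(X|\DD_{k-2})$ (bounded because $P_{D_{k-1}}$, hence the complementary projection, maps $\Ell^\infty(X)$ to itself), and its $U^{k}$-seminorm vanishes directly by the strengthened hypothesis --- no claim about bounded functions in $Z_{k-1}^\perp$ is needed anywhere. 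This is the content of the paper's inequality \eqref{eq:mul-uk-triangle}; with that single change your write-up coincides with the paper's proof of Proposition \ref{prop:Z_k-ap}.
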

\begin{proof}
Let $k\in \N_{0}$.
%
By the orthogonal decomposition 
\begin{equation}\label{eq:mul-dec-dk+1}
\Ell^2(X)=D_{k+1}\oplus W(X|\DD_{k})
\end{equation}
 from the conditional \jdlg\ Theorem \ref{mr:higher-JdLG} and the Host-Kra decomposition \eqref{eq:hk-dec}
 for $k+1$ we need to show 
$$
W(X|\DD_{k})\subset \ol{\{f\in\Ell^\infty(X):\, \|f\|_{U^{k+2}}=0\}}.
$$
By Lemma \ref{lem:mul-Wbdd} it suffices to show 
\begin{equation}\label{eq:mul-uk+2=0}
\text{$\|f\|_{U^{k+2}}=0$  for every 
$f\in W(X|\DD_{k})\cap \Ell^\infty(X)$. }
\end{equation}
We show \eqref{eq:mul-uk+2=0} for every $k\in\N_{0}$ by induction on $k$. For $k=0$ let $f\in W(X|\mathcal{D}_0)\cap \Ell^\infty(X)$, where $\mathcal{D}_0$ is the one-point factor. Then $f$ is weakly mixing by Remark \ref{rem:AandW}(f) and hence $\|f\|_{U^2}=0$ by Remark \ref{rem:semin-erg-12}.
%
Assume now that $k\in \N$ and that \eqref{eq:mul-uk+2=0} holds for $k-1$. 
 Let $f\in W(X|\DD_{k})$ be bounded and assume without loss of generality $\|f\|_\infty\leq 1$. Take $n\in \N$. By the triangle inequality  and the decomposition \eqref{eq:mul-dec-dk+1} 
for $k-1$ 
\begin{equation}\label{eq:mul-uk-triangle}
\|T^nf\cdot\ol{f}\|_{U^{k+1}}\leq \|P_{D_{k}}(T^nf\cdot\ol{f})\|_{U^{k+1}} + \|P_{W(X|\DD_{k-1})}(T^nf\cdot\ol{f})\|_{U^{k+1}}
\end{equation}
for the orthogonal projections $P_{D_{k}}$ and $P_{W(X|\DD_{k-1})}$ onto the factor $\DD_{k}$ and the subspace $W(X|\DD_{k-1})$, respectively. 
(Recall that the first projection and hence also the second, complementary, projection 
maps bounded functions to bounded functions.) 
The last summand in \eqref{eq:mul-uk-triangle} equals zero by the induction hypothesis. Moreover, recall that $P_{D_k}=JE_{\DD_k}$ for the corresponding Markov factor map $J$ and that both $J$ and $E_{\DD_k}$ act as contractions w.r.t.~the $\Ell^p$-norm for every $p\in [1,\infty]$.  
So we have by \eqref{eq:mul-uk-triangle}, Lemma \ref{lem:mul-uk-lp} and $\|f\|_\infty\leq 1$ denoting $p_{k+1}:=\frac{2^{k+1}}{k+2}$
\begin{eqnarray*}
&\ &
\|T^nf\cdot\ol{f}\|_{U^{k+1}}
\leq 
\|P_{D_{k}}(T^nf\cdot\ol{f})\|_{U^{k+1}}
\leq 
\|P_{D_{k}}(T^nf\cdot\ol{f})\|_{\Ell^{p_{k+1}}(X)}
\\
&\ &\quad \leq 
\|E_{\DD_{k}}(T^nf\cdot\ol{f})\|_{\Ell^{p_{k+1}}(Y)}
\leq \|E_{\DD_{k}}(T^nf\cdot\ol{f})\|_{\Ell^1(Y)}^{1/p_{k+1}}.
\end{eqnarray*}
Since $\|T^nf\cdot\ol{f}\|_{U^{k+1}}\leq 1$ by $\|f\|_\infty\leq 1$ and \eqref{eq:U-Linfty},  this implies 
$$
\|T^nf\cdot\ol{f}\|_{U^{k+1}}^{2^{k+1}}\leq \|T^nf\cdot\ol{f}\|_{U^{k+1}}^{p_{k+1}}\leq \|E_{\DD_{k}}(T^nf\cdot\ol{f})\|_{\Ell^1(Y)}
$$
and, by $f\in W(X|\DD_k)$, 
$$
\|f\|_{U^{k+2}}^{2^{k+2}}=\limsup_{N\to\infty}\aveN \|T^nf\cdot\ol{f}\|_{U^{k+1}}^{2^{k+1}}=0
$$
follows.
This proves \eqref{eq:mul-uk+2=0} and completes the proof. 
\end{proof}

\begin{remark}\label{rem:Fu-orig}
As mentioned in the introduction, as a corollary of  Proposition \ref{prop:ghk-control-mult-ave} and Proposition \ref{prop:Z_k-ap} we obtain an alternative proof of the fact proved by Furstenberg \cite{Fu77}, see also Frantzikinakis \cite[Theorem 5.2]{Frantz04} based on Furstenberg \cite[Theorem 7.1]{Fu77}, that the distal factor $\DD_{k-1}$ of order $k-1$ is characteristic for $k$-term multiple recurrence. 
\end{remark}

We now define compact extensions, cf.~Furstenberg, Katznelson \cite[Def.~3.1]{FuKa78} and Furstenberg, Katznelson, Ornstein \cite[Section 9]{FuKaOr82}.
\begin{definition}\label{def:comp-ext}
A \mps\ $(X,\mu,T)$ is called a \emph{compact extension}\footnote{Tao \cite[Def.~2.13.7]{Tao-ET} calls an extension $(X,\mu,T)$ of $(Y,\nu,S)$ compact if every function $f\in\Ell^2(X|Y)$ is conditionally almost periodic in measure, cf.~Furstenberg, Katznelson \cite[Section 2]{FuKa78}, Furstenberg \cite[Section 6.3]{Fu-book}. By Remark \ref{rem:capm} the two definitions are equivalent.} of its factor $(Y,\nu,S)$ if $\ol{A(X|Y)}=\Ell^2(X)$. 
\end{definition}
%
In particular, for an ergodic \mps\  $(X,\mu,T)$ and every $k\in \N_{0}$ the distal factor $\mathcal{D}_{k+1}$ is by definition 
a compact extension of $\mathcal{D}_{k}$, namely 
 the maximal compact extension of $\mathcal{D}_{k}$ in $(X,\mu,T)$.

The last ingredient of the proof of Theorem \ref{thm:mr} is the following property of compact extensions, see Tao \cite[Theorem 2.13.11]{Tao-ET} combined with Remark \ref{rem:capm}, cf.~Furstenberg, Katznelson \cite[Prop.~3.5]{FuKa78}, 
Furstenberg, Katznelson, Ornstein \cite[Thm.~9.1]{FuKaOr82} and Einsiedler, Ward \cite[Section 7.9]{EW}. 

\begin{proposition}[Compact extensions preserve MR]\label{prop:ap-ext-MR}
Let $(X,\mu,T)$ be a compact extension of $(Y,\nu,S)$. 
If $(Y,\nu,S)$ satisfies MR then so does $(X,\mu,T)$.
\end{proposition}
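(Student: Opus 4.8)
The plan is to establish MR$_k$ for $(X,\mu,T)$ for each fixed $k\in\N$ by lifting the recurrence already available on $(Y,\nu,S)$ through the compact extension, using the conditional almost periodicity guaranteed by Definition~\ref{def:comp-ext}. Fix $0<f\in\Ell^\infty(X)$. Using the factor map and \eqref{eq:EY} I would first rewrite each multiple correlation fiberwise as
\[
\int_X\prod_{j=0}^kT^{jn}f\,d\mu=\int_Y\EY\Big(\prod_{j=0}^kT^{jn}f\Big)\,d\nu,
\]
so that it suffices to bound $\EY(\prod_j T^{jn}f)$ from below on a base set of positive measure, along a set of $n$ of positive lower density.

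Since the extension is compact, $\ol{A(X|Y)}=\Ell^2(X)$, so by Remark~\ref{rem:capm} the function $f$ lies in $AM(X|Y)$; concretely I would take $g=\Eins_Mf$ for a large $M\subset Y$, which is bounded, nonnegative, conditionally almost periodic, and still satisfies $\int_Xg\,d\mu>0$. By conditional almost periodicity the orbit $\{T^ng\}$ lies within an $\veps$-neighbourhood, in $\|\cdot\|_{\Ell^{2,\infty}(X|Y)}$, of a module zonotope $Z(g_1,\dots,g_m)$ whose generators may be taken bounded (as in Step~1 of the proof of Lemma~\ref{lem:mul-E-factor}). Fixing $c>0$ with $\nu(B)>0$ for $B:=\{y:\EY(g^{k+1})(y)\ge c\}$ (possible since $g\ge0$ and $\int g>0$, cf.\ \eqref{eq:PYf>0}) isolates the base set on which the fibers of $g$ carry substantial mass.

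The crux is to find a set $R\subset\N$ of positive lower density such that for each $n\in R$ one has $\|T^{jn}g-g\|_{\Ell^2(X|Y)}<\veps$ simultaneously for $j=1,\dots,k$, on a subset of $B$ of measure bounded below uniformly in $n$. Granting this, on that subset $\EY(\prod_{j=0}^kT^{jn}g)$ differs from $\EY(g^{k+1})\ge c$ by only $O(\veps)$, since $g$ is bounded and the product is therefore conditionally Lipschitz in each factor; as $g\ge0$ makes the integrand nonnegative everywhere, $\int_Y\EY(\prod_jT^{jn}g)\,d\nu$ is bounded below by a positive constant $\eta$ for every $n\in R$. Averaging over $n$ then gives a positive $\liminf$ for $g$, and undoing the approximation $g\approx f$ (multilinearity of the correlation together with $\|f-g\|_{\Ell^2(X)}$ small) transfers the bound to $f$, proving MR$_k$.

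Constructing $R$ is the main obstacle, and it is precisely where MR$_k$ on $(Y,\nu,S)$ is consumed. The difficulty is twofold: the module zonotope is \emph{not} conditionally totally bounded, since its coefficients range over the non-compact unit ball of $\Ell^\infty(Y)$, and the fiberwise return times of $T^{jn}g$ to $g$ depend on the base point, so neither a direct pigeonhole on the orbit nor the unconditional ``one syndetic return forces simultaneous returns'' argument applies. The resolution I would carry out is to discretize: fix a finite $\veps$-net $\{w_1,\dots,w_r\}$ of the coefficient polydisc $\ol{\D}^{\,m}$, colour $Y$ according to which net point the coefficient vector of the relevant orbit element lies nearest, and apply MR$_k$ on $Y$ to a positive-measure cell of this finite colouring intersected with $B$. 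Via the intertwining \eqref{eq:ET=SE}, which transports the conditional norm along the base shift $S$, the resulting base recurrence simultaneously forces the fibers over $y,S^ny,\dots,S^{kn}y$ to be substantial and the discretized fiber rotations to align, producing $R$. Reconciling these two roles of the base recurrence cleanly is the technical heart; the remaining steps are routine bookkeeping of the $\veps$'s and of the passage from $g$ back to $f$.
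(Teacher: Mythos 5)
Note first that the paper does not prove Proposition \ref{prop:ap-ext-MR} at all: it is quoted from the literature (Tao \cite[Theorem 2.13.11]{Tao-ET}, cf.\ \cite[Prop.~3.5]{FuKa78}, \cite[Thm.~9.1]{FuKaOr82}, \cite[Section 7.9]{EW}), so your attempt must be measured against those standard proofs. Against them, your outline has a genuine gap at exactly the point you yourself flag as ``the technical heart'': the construction of $R$. Your proposed single colouring of $Y$ ``according to which net point the coefficient vector of the relevant orbit element lies nearest'' is not well defined, because the coefficients $c_{j,n}\in\Ell^\infty(Y)$ in the zonotope approximation of $T^ng$ depend on the time $n$ as well as on the base point; there is one colouring of $Y$ for \emph{each} $n$, not one colouring to which MR$_k$ on the base can be applied. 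And base recurrence cannot do the synchronizing work you assign to it: if $E\subset Y$ is any positive-measure cell and $\nu(E\cap \psi^{-n}E\cap\ldots\cap\psi^{-kn}E)>0$, then for $y$ in this intersection you learn that $\psi^{jn}y\in E$, which via \eqref{eq:ET=SE} controls the mass and the time-zero coefficient data of $g$ over the \emph{shifted} fibres, i.e.\ the mass of $T^{jn}g$ over the fibre of $y$; it in no way forces the coefficient vector of $T^{jn}g$ over $y$ to be close to that of $g$ over $y$. Recurrence of the base point does not align the ``rotation'' of the almost periodic structure inside the fibre, and this alignment is Ramsey-theoretic content that no measure-theoretic pigeonhole on a single colouring can supply.

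In the proofs the paper cites, this is resolved by van der Waerden's theorem, which is entirely absent from your proposal. One colours, for each fibre $y$, a long block of times $0,n,2n,\ldots,kNn$ by the net position of the fibre coefficient vector; van der Waerden (with $N$ a van der Waerden number for $k+1$-term progressions in the given number of colours) produces a monochromatic progression $a,a+d,\ldots,a+kd$ with $d\leq N$, so that $T^{(a+jd)n}g\approx T^{an}g$ over the fibre for all $j$, and applying the power $T^{-an}$, which is an isometry for the conditional norm up to the base shift, yields $T^{jdn}g\approx g$ simultaneously for $j=1,\ldots,k$ over a shifted fibre; a pigeonhole over the finitely many pairs $(a,d)$ gives a positive-measure set with common parameters, and nonnegativity of the integrand absorbs the dilation $n\mapsto dn$. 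Crucially, to keep all $kN$ shifted fibres inside the good set one must invoke multiple recurrence \emph{on the base of order $kN$}, not order $k$ --- which is precisely why the hypothesis of the proposition is full MR for $(Y,\nu,S)$ rather than MR$_k$. Your scheme, which claims to lift MR$_k$ to MR$_k$ while consuming only $k$-term recurrence downstairs, is therefore structurally at odds with the known arguments, and without the van der Waerden input the crux step fails. The remaining ingredients of your outline --- the fibrewise rewriting via \eqref{eq:EY}, the truncation $g=\Eins_Mf$ with $0\leq g\leq f$ so that positivity transfers the lower bound back to $f$, the set $B$ where $\EY(g^{k+1})\geq c$, and the conditional Cauchy--Schwarz estimate for the product --- are correct but routine.
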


\begin{proof}[Proof of multiple recurrence for ergodic systems (Theorem \ref{thm:mr})]
Let $(X,\mu,T)$ be an ergodic \mps\ and $k\in\N$.
Denoting by $P_{D_{k-1}}$ the orthogonal projection onto $D_{k-1}$, for $f\in\Ell^\infty(X)$ with $f>0$ the function $P_{D_{k-1}}f$ is also bounded and satisfies $P_{D_{k-1}}f>0$  by Lemma \ref{lem:mul-E-factor} and \eqref{eq:PYf>0}. 
Therefore Proposition \ref{prop:Z_k-ap} and Corollary \ref{cor:ghk-control-mult-ave} imply
\begin{equation}\label{eq:mul-mea-char-fac}
\limN \left\|\aveN T^nf\cdots T^{kn}f-\aveN T^nP_{D_{k-1}}f\cdots T^{kn}P_{D_{k-1}}f
\right\|_{\Ell^2(X)}=0
\end{equation}
and we can assume without loss of generality that $\Ell^2(X)=D_{k-1}$. 
%
So we have the chain of factors 
$$
X=
\DD_{k-1} \to \DD_{k-2} \to \ldots \to \DD_1 \to \DD_0=\{\cdot\}
$$
where $\{\cdot\}$ denotes the one-point factor, 
and every factor in this chain is a compact extension of the next one by definition. Finally, the one-point factor clearly satisfies MR. Thus every $\DD_j$, and hence also $X=\DD_{k-1}$,  satisfies MR by Proposition \ref{prop:ap-ext-MR}. 
The proof is complete.
\end{proof}


\end{document}